\title{Stability results for graphs with a critical edge}
\author{Alexander Roberts\thanks{Mathematical Institute, University of Oxford, Andrew Wiles Building, Radcliffe Observatory Quarter, Woodstock Road, Oxford, United Kingdom. \newline  E-mail: \texttt{\{robertsa, scott\}@maths.ox.ac.uk}.} \and Alex Scott\footnotemark[1]}
\newtheoremstyle{case}{}{}{\normalfont}{}{\itshape}{:}{ }{}
\newtheorem{thm}{Theorem}[section]
\newtheorem{lem}[thm]{Lemma}
\newtheorem{prop}[thm]{Proposition}
\theoremstyle{definition}
\newtheorem{defn}[thm]{Definition}
\numberwithin{equation}{section}
\newtheoremstyle{case}{}{}{\normalfont}{}{\itshape}{\normalfont:}{ }{}
\theoremstyle{case}
\def\comment#1{}
\newcommand{\ex}{{\rm ex}}
\newcommand{\Ex}{{\rm Ex}}
\newcommand{\beq}{\begin{eqnarray*}}
\newcommand{\eeq}{\end{eqnarray*}}
\def\build#1_#2^#3{\mathrel{\mathop{\kern 0pt#1}\limits_{#2}^{#3}}}
\newcommand{\beqs}{\begin{eqnarray}}
\newcommand{\eeqs}{\end{eqnarray}}
\newcommand{\bN}{\mathbb{N}}
\numberwithin{equation}{section}
\tikzset{
    position/.style args={#1:#2 from #3}{
        at=(#3.#1), anchor=#1+180, shift=(#1:#2)
    }
}
\begin{document}

\maketitle

\begin{abstract}
The classical stability theorem of Erd\H{o}s and Simonovits states that, for any fixed graph with chromatic number $k+1 \ge 3$, the following holds: every $n$-vertex graph that is $H$-free and has within $o(n^2)$ of the maximal possible number of edges can be made into the $k$-partite Tur\'{a}n graph by adding and deleting $o(n^2)$ edges.   
In this paper, we prove sharper quantitative results for graphs $H$ with a critical edge, both for the Erd\H{o}s-Simonovits Theorem (distance to the Tur\'{a}n graph) 
and for the closely related question of how close an $H$-free graph is to being $k$-partite.
In many cases, these results are optimal to within a constant factor.
\end{abstract}

\section{Introduction}
\tikzstyle{vertex} = [fill,shape=circle,node distance=80pt]
\tikzstyle{vertex2} = [draw,shape=circle,node distance=80pt,text width={width("$M^1_1$")},align=center]
\tikzstyle{hex} = [draw,regular polygon, regular polygon sides = 6]
\tikzstyle{switch} = [fill,shape=circle,node distance=80pt]
\tikzstyle{vsquare} = [draw,shape=rectangle,node distance = 80pt]
\tikzstyle{edge} = [fill,opacity=.3,fill opacity=.3,line cap=round, line join=round, line width=30pt]

For $n,k\ge1$, the {\em $k$-partite Tur\'{a}n graph $T_k(n)$} is the complete $k$-partite graph on $n$ vertices with vertex classes as equal as possible (or equivalently the $k$-partite graph with maximum number of edges).
We write $t_k(n)=e(T_k(n))$ for the number of edges in the Tur\'{a}n graph.
A fundamental result in extremal graph theory is the Erd\H{o}s-Simonovits Stability Theorem, which says that an $H$-free graph that is close to extremal must in fact look very much like a Tur\'{a}n graph.

\begin{thm}[Erd\H{o}s-Simonovits \cite{erd-sim}]\label{e-s}
Let $k \ge 2$ and suppose that $H$ is a graph with $\chi(H) = k+1$. If $G$ is an $H$-free graph with $e(G) \ge t_k(n) - o(n^2)$, then $G$ can be formed from $T_k(n)$ by adding and deleting $o(n^2)$ edges.
\end{thm}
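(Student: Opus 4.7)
The plan is the classical approach via the Szemer\'edi Regularity Lemma. Fix $\epsilon>0$; it suffices to show that if $\delta$ is small enough (depending on $\epsilon$ and $|V(H)|$) and $n$ is large, then any $H$-free $G$ with $e(G)\ge t_k(n)-\delta n^2$ differs from $T_k(n)$ in at most $\epsilon n^2$ edges. Apply the Regularity Lemma with regularity parameter $\epsilon'$ and density threshold $d$ satisfying $\epsilon'\ll d\ll\epsilon$ and, crucially, $\epsilon'$ small relative to $d^{|E(H)|}/|V(H)|$. This yields an $\epsilon'$-regular partition $V_0,V_1,\ldots,V_m$ with $m$ bounded in terms of $\epsilon'$. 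Form the reduced graph $R$ on $\{1,\ldots,m\}$ whose edges are the $\epsilon'$-regular pairs of density at least $d$. A standard edge-count shows that all but $O\bigl((\epsilon'+d+1/m)n^2\bigr)$ edges of $G$ lie in pairs encoded by edges of $R$, and combining with the lower bound on $e(G)$ gives $e(R)\ge t_k(m)-O(\epsilon)\binom{m}{2}$.

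The key step, and the main technical obstacle, is to show that $R$ is $K_{k+1}$-free. Suppose $R$ contains a copy of $K_{k+1}$: the corresponding $k+1$ clusters are pairwise $\epsilon'$-regular with density at least $d$. Since $\chi(H)=k+1$, the graph $H$ embeds into the blow-up of $K_{k+1}$ in which every part has size $|V(H)|$. The counting (or embedding) lemma for $\epsilon'$-regular tuples, applied with the parameter hierarchy above, then produces a copy of this blow-up, and hence of $H$, inside $G$, contradicting $H$-freeness. Choosing $\epsilon'$ sufficiently small relative to $d$ and $|V(H)|$ is essential here, and dictates the dependence of $\delta$ on $\epsilon$.

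We now know $R$ is $K_{k+1}$-free with $e(R)\ge t_k(m)-O(\epsilon)\binom{m}{2}$, so a stability form of Tur\'an's theorem (provable by a short direct argument using high-degree vertices, or by induction on $k$) supplies a $k$-partition $W_1,\ldots,W_k$ of $V(R)$ with $O(\epsilon)m^2$ edges in the wrong place, i.e.\ edges inside a part or non-edges between parts. Lift to a partition $U_1,\ldots,U_k$ of $V(G)$ by letting $U_i$ be the union of all clusters $V_j$ with $j\in W_i$, and distributing $V_0$ arbitrarily. Each edge of $G$ in the symmetric difference with the complete $k$-partite graph on $U_1,\ldots,U_k$ is accounted for by one of: lying inside a cluster, inside $V_0$, inside an irregular pair, inside a regular pair of density less than $d$, or inside a regular pair whose two clusters share a part of $R$; a symmetric list handles the missing edges. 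Each category contributes $O(\epsilon n^2)$, and a final $O(\epsilon n^2)$ edits rebalance the parts to match those of $T_k(n)$. Thus $|E(G)\triangle E(T_k(n))|=O(\epsilon n^2)$, and since $\epsilon$ was arbitrary, this quantity is $o(n^2)$.
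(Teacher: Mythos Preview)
The paper does not prove this statement: Theorem~\ref{e-s} is the classical Erd\H{o}s--Simonovits stability theorem, quoted from~\cite{erd-sim} and used as a black box (for instance, inside the proof of Lemma~\ref{stableb}). There is therefore no paper proof to compare your attempt against.

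Your sketch is one of the standard modern proofs, via the Regularity Lemma together with the embedding/counting lemma, reducing the case of a general $H$ with $\chi(H)=k+1$ to the $K_{k+1}$ case on the reduced graph. The outline is correct. One point worth flagging: when you appeal to ``a stability form of Tur\'an's theorem'' for the reduced graph $R$, you are invoking the $H=K_{k+1}$ instance of the very theorem being proved. You rightly note that this special case admits a short direct argument (iteratively removing low-degree vertices and using that a $K_{k+1}$-free graph of minimum degree close to $(1-1/k)n$ must be $k$-partite), but a reader should understand that this is where the substance of the proof lies, and that the regularity machinery only handles the reduction from arbitrary $H$ to cliques.
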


It is natural to ask how the $o(n^2)$ terms here depend on each other.  Thus we will consider an $H$-free graph $G$ with $n$ vertices and $t_k(n)-f(n)$ edges, where $f(n)=o(n^2)$, and ask how close $G$ is to the Tur\'{a}n graph $T_k(n)$.

In this paper, we will be interested in the case when $H$ has a critical edge:
an edge $e$ in a graph $H$ is said to be \emph{critical} if $\chi(H - e) = \chi(H) - 1$.  It was shown by Simonovits \cite[Theorem 2.3]{Sim1} that if $H$ has a critical edge, then for sufficiently large $n$ the Tur\'an graph $T_k(n)$ is the unique extremal $H$-free graph on $n$-vertices (while if $H$ does not have a critical edge, then the Tur\'{a}n graph is not extremal). In this case, we will prove the following version of the Erd\H{o}s-Simonovits Theorem.

\begin{thm}\label{escrit}
Let $H$ be a graph with a critical edge and $\chi(H)$ = $k+1 \ge 3$, and let $f(n) = o(n^2)$ be a function.
If $G$ is an $H$-free graph with $n$ vertices and  $e(G) \ge t_k(n) - f(n)$, then $G$ can be formed from $T_k(n)$ by adding and deleting $O(f(n)^{1/2}n)$ edges.
\end{thm}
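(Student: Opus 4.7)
The plan is to apply the Erd\H{o}s-Simonovits stability theorem to obtain an initial $k$-partition of $V(G)$, then sharpen the bound using the critical edge. I first apply Theorem~\ref{e-s} to obtain a $k$-partition of $V(G)$ with $o(n^2)$ intra-class edges, and among all such partitions choose $\mathcal{P}=(V_1,\dots,V_k)$ that minimizes $m^+ := \sum_i e(G[V_i])$. Set $m^-$ to be the number of missing cross-class edges and $s_i := |V_i| - n/k$. The minimality of $\mathcal{P}$ forces $|N(v) \cap V_i| \le |N(v) \cap V_j|$ for every $v \in V_i$ and $j \ne i$, and a direct edge count yields $e(G) = t_k(n) - \tfrac{1}{2}\sum_i s_i^2 + m^+ - m^- + O(1)$, so the assumption $e(G) \ge t_k(n) - f(n)$ forces $\tfrac{1}{2}\sum_i s_i^2 + (m^- - m^+) \le f(n) + O(1)$.

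Next, the critical edge enters the picture. Since $\chi(H-e)=k$, we have $H \subseteq K_1 \vee K_k(t)$ for some constant $t=t(H)$. Consequently, for any bad edge $vu \in E(G[V_i])$, if $v$ and $u$ had enough common neighbours in each $V_l$ for $l \ne i$, then $vu$ could be extended using a $K_k(t)$-structure across the cross-classes into a copy of $H$. Hence for each such $vu$ there exists $l \ne i$ with $d^-_l(v) + d^-_l(u) \ge n/k - O(1)$, where $d^-_l(v) := |V_l| - |N(v) \cap V_l|$. Summing over all bad edges yields the quadratic bound
\[ \sum_v d^+(v)\, d^-(v) \;\ge\; \Omega(m^+\,n), \]
where $d^+(v) := |N(v)\cap V_i|$ and $d^-(v) := \sum_{l\ne i}d^-_l(v)$.

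The next step is to combine this with Cauchy-Schwarz (after a cleanup that removes the few vertices with abnormally large $d^+(v)+d^-(v)$) to obtain $m^+ \le c\, m^-$ for some $c<1$. Substituting into the edge-count inequality from the first paragraph gives $m^+,m^- = O(f(n))$ and $\sum_i s_i^2 = O(f(n))$, so the partition has imbalance $\max_i|s_i| = O(f(n)^{1/2})$. Rebalancing $\mathcal{P}$ to obtain the canonical $T_k(n)$ costs $O(f(n)^{1/2}\cdot n)$ further edges, and adding $m^+ + m^- = O(f(n)) \le O(f(n)^{1/2}\, n)$ (valid whenever $f(n) \le n^2$) gives the target bound.

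The main obstacle is extracting the sharp constant $c<1$ in the implication $m^+ \le c\, m^-$: a naive Cauchy-Schwarz application to $\sum d^+(v)\,d^-(v)$ only yields $m^+=O(m^-)$ with an unspecified constant, which is too weak to close the argument. A cleanup step isolating the small set of atypical vertices, followed by a weighted refinement of the per-vertex argument on the bulk, is the most plausible route. The borderline case $k=2$ is especially delicate since the ``natural'' constant $1/(k-1)=1$; here one exploits triangle-freeness ($N(u)\cap N(v) = \emptyset$ for any edge $uv$) to supply the missing slack.
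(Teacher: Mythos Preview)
Your proposal is a sketch rather than a proof, and the place where it stops is exactly the place where the work lies. You correctly set up the edge-count identity
\[
e(G)=t_k(n)-\tfrac12\sum_i s_i^2+m^+-m^-+O(1),
\]
and you correctly observe that the critical edge forces, for every intra-class edge $uv$, that $u$ and $v$ together miss a positive fraction of some other class. (A small inaccuracy: the bound is $d_l^-(u)+d_l^-(v)\ge(1/k-\eta)n$ for any fixed $\eta>0$, not $n/k-O(1)$; having merely $\omega(1)$ common neighbours in each $V_l$ is not enough to embed $T_k(kt)$ without invoking Erd\H{o}s--Stone on the cross-edges, which you should make explicit.) But from $\sum_v d^+(v)d^-(v)=\Omega(m^+ n)$ together with $\sum_v d^-(v)=2m^-$ and the trivial bound $d^+(v)\le n/k$, one only gets $m^+\le C\,m^-$ with a constant $C$ of order $k$. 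To feed this back into $m^--m^+\le f(n)$ and deduce $m^+,m^-=O(f(n))$ you genuinely need $C<1$, and neither the ``cleanup'' nor the ``weighted refinement'' you allude to is specified; I do not see how to push the per-edge inequality to $C<1$ by these means alone, and for $k=2$ you yourself note the natural constant collapses to $1$.

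The paper sidesteps this entirely. Rather than comparing $m^+$ with $m^-$, it shows directly (Lemma~\ref{stableb}) that $m^+=O(f(n))$ by a deletion argument: the set $L$ of vertices of degree below $(1-\delta)\tfrac{k-1}{k}n$ has size $O(f(n)/n)$ (since deleting any such set of size $b$ leaves an $H$-free graph with at least $t_k(n-b)-f(n)+\Omega(\delta n b)$ edges, which forces $b=O(f(n)/n)$ by Simonovits' extremal result), and then one checks that $G\setminus L$ is \emph{exactly} $k$-partite. The latter is precisely your per-edge observation, but used qualitatively: if $uv$ is an intra-edge with $u,v\notin L$, then both have nearly full degree into every other class, so their common neighbourhood is large in every $V_l$, and Erd\H{o}s--Stone produces a $T_k(kt)$ there, hence a copy of $H$ --- contradiction. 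Once $m^+=O(f(n))$ is in hand, your final paragraph (balance the parts, add the missing edges) goes through verbatim; indeed this is exactly how the paper finishes. So the missing ingredient in your plan is not a sharper Cauchy--Schwarz but the low-degree deletion step that gives $m^+=O(f(n))$ outright.
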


There is a simple construction showing that this bound is sharp up to a constant factor: if we take the Tur\'{a}n graph $T_k(n)$ and imbalance it by moving $\lceil f(n)^{1/2} \rceil$ vertices from one class to another then we obtain a $k$-partite graph $G$ with $n$ vertices and $t_k(n)-\Theta(f(n))$ edges.  However, in order to obtain $T_k(n)$ from $G$ we must change at least $\Omega(f(n)^{1/2}n)$ edges.

Theorem \ref{escrit} will follow from a result on the closely related question: how many edges do we need to delete from $G$ in order to make it $k$-partite? We will say that a graph $G$ is \em $r$ edges away from being $k$-partite \em if the largest $k$-partite subgraph of $G$ has $e(G) -r$ edges. In a recent paper, F{\"u}redi \cite{Furedi} gave a beautiful proof of the following result.\footnote{F{\"u}redi also claims that Gy\H{o}ri's work \cite{gyori} implies the bound $O(f(n)^2n^{-2})$. But F{\"u}redi's claim is not correct, and Proposition \ref{counter1} below shows that the bound $O(f(n)^2n^{-2})$ is not in general valid.}

\begin{thm}[F{\"u}redi \cite{Furedi}]\label{furedi}
Suppose that $G$ is a $K_{k+1}$-free graph on $n$ vertices with $t_k(n)-t$ edges.  Then $G$ can be made $k$-partite by deleting at most $t$ edges.
\end{thm}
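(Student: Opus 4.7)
The plan is to analyze a maximum $k$-partite subgraph $F$ of $G$, with parts $V_1, \ldots, V_k$ of sizes $n_1, \ldots, n_k$; let $B = E(G) \setminus E(F)$ denote the ``bad'' edges lying inside some $V_i$, so the goal is to show $|B| \le t := t_k(n) - e(G)$. The maximality of $F$ yields the standard local condition that for every $v \in V_i$, $|N(v) \cap V_i| \le |N(v) \cap V_j|$ for each $j \ne i$, and hence the bad-degree $b(v) := |N(v) \cap V_i|$ satisfies $b(v) \le d(v)/k$. Summing over $v$ gives $|B| \le e(G)/k$, which already yields $|B| \le t$ whenever $e(G) \le k\, t_k(n)/(k+1)$.

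For the near-extremal range $e(G) > k\, t_k(n)/(k+1)$, I would proceed by induction on $n$. The base case $n \le k$ is trivial since $G$ is then $k$-partite by vertex-colouring. For the inductive step, let $v$ be a vertex of minimum degree and apply the induction hypothesis to $G' = G - v$ to obtain a $k$-partition of $V(G')$ with at most $t_k(n-1) - e(G')$ bad edges. Re-inserting $v$ into the part that minimises $|N(v) \cap V_i|$ introduces at most $d(v)/k$ new bad edges; writing $\Delta := t_k(n) - t_k(n-1)$, the induction closes provided $d(v) \le k\Delta/(k+1)$, which holds for a minimum-degree vertex unless $\delta(G) > k\Delta/(k+1)$.

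The main obstacle is the sub-case in which both $e(G)$ is near-extremal and $\delta(G) > k\Delta/(k+1)$, so the induction above fails for every choice of $v$. Here I would use the $K_{k+1}$-freeness more strongly: for each bad edge $uv \in V_i$, the common neighbourhood $N(u) \cap N(v)$ cannot support a ``rainbow'' $K_{k-1}$ (one vertex in each $V_j$ for $j \ne i$, pairwise adjacent), since together with $\{u,v\}$ such a clique would form a $K_{k+1}$ in $G$. Converting this local obstruction into a global count -- for instance via an injection from bad edges into ``missing'' cross edges of the complete $k$-partite graph on $(V_1, \ldots, V_k)$, or by a suitable weighted double-count -- is the delicate step and the technical heart of the argument. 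An alternative route is to invoke an Andr\'asfai--Erd\H{o}s--S\'os-type theorem in the very dense sub-regime $\delta(G) > (3k-4)n/(3k-1)$, where $G$ is forced to be $k$-partite and the conclusion is immediate, leaving only the intermediate range to dispatch.
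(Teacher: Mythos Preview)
This theorem is not proved in the paper: it is quoted as a result of F\"uredi \cite{Furedi}, cited for motivation, and then superseded by the paper's own Theorem~\ref{stable}. There is therefore no proof in the paper to compare your attempt against.

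Assessing your proposal on its own merits: it is incomplete. The averaging bound $|B|\le e(G)/k$ from maximality of the partition is correct and disposes of the sparse range, and the induction on $n$ via a minimum-degree vertex is a reasonable opening move. But you explicitly leave the crucial case unresolved --- when $e(G)$ is near $t_k(n)$ and $\delta(G)$ is too large for the induction to close, you only \emph{describe} what would be needed (``an injection from bad edges into missing cross edges, or a suitable weighted double-count'') and call this ``the technical heart of the argument'' without supplying it. The Andr\'asfai--Erd\H{o}s--S\'os fallback handles only $\delta(G)>\tfrac{3k-4}{3k-1}\,n$, which still leaves a nonempty intermediate band uncovered. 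So the proposal has a genuine gap precisely where the $K_{k+1}$-freeness must do real work.

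For what it is worth, F\"uredi's own argument is quite different in shape: it inducts on $k$ rather than $n$. One picks a vertex $x$ of \emph{maximum} degree $\Delta$; then $G[N(x)]$ is $K_k$-free, and by induction can be made $(k-1)$-partite by deleting at most $t_{k-1}(\Delta)-e(G[N(x)])$ edges. Adjoining the class $V\setminus N(x)$ gives a $k$-partition of $G$, and a short calculation using $\sum_{v\notin N(x)}d(v)\le\Delta(n-\Delta)$ together with the inequality $t_{k-1}(\Delta)+\Delta(n-\Delta)\le t_k(n)$ finishes the bound in one stroke --- avoiding entirely the case-split you are wrestling with.
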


We will show that a much stronger bound holds. More generally, we will prove results for graphs $H$ that contain a critical edge (note that every edge of $K_{k+1}$ is critical). As we will see below, our bounds are sharp to within a constant factor for many graphs $H$ (including $K_{k+1}$).

\begin{thm}\label{stable}
Let $H$ be a graph with a critical edge and $\chi(H) = k+1 \ge 3$, and let $f(n) = o(n^2)$ be a function.
If $G$ is an $H$-free graph with $n$ vertices and $e(G) \ge t_k(n) - f(n)$ then $G$ can be made $k$-partite by deleting $O(n^{-1}f(n)^{3/2})$ edges.
\end{thm}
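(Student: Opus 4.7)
My plan is to exhibit a $k$-partition $V_1, \dots, V_k$ of $V(G)$ with few within-class edges. I start by applying Theorem \ref{escrit} to obtain a $k$-partition such that $G$ differs from the complete $k$-partite graph $K[V_1, \dots, V_k]$ by $O(f(n)^{1/2}n)$ edges. Writing $B$ for the number of within-class edges of $G$ on this partition and $M$ for the number of missing cross-edges, one has $B, M \leq O(f(n)^{1/2} n)$, and a direct edge count gives $M \leq f(n) + B$. I then refine the partition to a local maximum of the $k$-cut, which yields the property $b(v) := |N(v) \cap V_{i(v)}| \leq d_{V_j}(v)$ for every vertex $v$ and every $j \neq i(v)$.

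The heart of the argument is an embedding step. Fix a proper $k$-colouring of $H - e$ with colour classes $U_1, \dots, U_k$, where the endpoints $u, v$ of the critical edge $e$ both lie in $U_1$ (such a colouring exists because $e$ is critical). For each within-class edge $xy \in V_i$, the $H$-freeness of $G$ forbids any embedding of $H - e$ into $G$ sending $u \mapsto x$, $v \mapsto y$, and each $U_j$ into $V_{\sigma(j)}$, for an appropriate permutation $\sigma$ with $\sigma(1) = i$. Bounding the number of potential such embeddings from below and the number obstructed by missing cross-edges from above forces the per-edge inequality
\[
D(x) + D(y) \;\geq\; c(H) \cdot n
\]
for some constant $c(H) > 0$, where $D(v) := \sum_{j \neq i(v)} |V_j \setminus N(v)|$ is the cross-deficit of $v$. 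Summing over all bad edges yields $\sum_v b(v) D(v) \geq c(H)\, n B$.

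The remaining task is to bound $\sum_v b(v) D(v)$ from above tightly enough to conclude $B = O(f(n)^{3/2}/n)$. A crude Cauchy--Schwarz using only $\sum b(v) = 2B$ and $\sum D(v) = 2M$ recovers $B = O(M) = O(f(n))$, essentially reproducing F\"uredi's theorem. To extract the sharper bound I would split the vertices into ``heavy'' ones with $D(v) \geq \tfrac{1}{2} c(H) n$ (of which there are at most $O(M/n)$) and ``light'' ones: the per-edge inequality forbids bad edges with two light endpoints, so every bad edge has a heavy endpoint. A careful H\"older-type estimate, restricting $\sum_v b(v) D(v)$ to heavy $v$ and combining with the max-cut bound on $\max_v b(v)$ together with $M \leq f(n) + B$, yields $B = O((f(n)+B)^{3/2}/n)$; after bootstrapping from a preliminary bound such as $B = O(f(n))$, this gives the required $B = O(f(n)^{3/2}/n)$. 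The main obstacle is precisely this final aggregation: extracting the exponent $3/2$ requires balancing contributions from heavy and light vertices and exploiting the structural input from both Theorem \ref{escrit} (to control $M$) and the max-cut refinement (to control $b$) simultaneously.
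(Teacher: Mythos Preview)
Your heavy-vertex scheme is essentially the paper's argument in different clothing: your ``heavy'' vertices (those with cross-deficit $D(v)\ge \tfrac12 c(H)n$) play the role of the paper's low-degree set $S$, and your per-edge inequality $D(x)+D(y)\ge c(H)n$ is exactly the content of the paper's step showing that every within-class edge meets $S$. But two things go wrong.

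\textbf{Circularity.} You begin by invoking Theorem~\ref{escrit}, which in this paper is a \emph{corollary} of Theorem~\ref{stable}. What you actually need to start is only $B=o(n^2)$ on a max-cut partition (so that $M\le f(n)+B=o(n^2)$ and the embedding step goes through), and this follows already from the qualitative stability Theorem~\ref{e-s}. For the bootstrap seed $B=O(f(n))$ you need Lemma~\ref{stableb}, not Theorem~\ref{escrit}; your remark that a ``crude Cauchy--Schwarz'' on $\sum b(v)D(v)$ recovers $B=O(M)$ is not correct---that inequality only gives $c(H)\,nB\le 2M\max_v b(v)$, which is useless until $\max_v b(v)$ is controlled.

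\textbf{The genuine gap: $\max_v b(v)$.} Your aggregation collapses to $B \le |\{\text{heavy}\}|\cdot \max_v b(v) = O(M/n)\cdot\max_v b(v)$, so you need $\max_v b(v)=O(\sqrt{M})$. You attribute this to ``the max-cut bound on $\max_v b(v)$'', but the max-cut property only says $b(v)\le \min_{j\neq i(v)} d_{V_j}(v)\le d(v)/k=O(n)$. No H\"older manipulation rescues this, because once one endpoint of a bad edge is heavy the inequality $D(x)+D(y)\ge c(H)n$ is satisfied automatically and imposes nothing further on $b(v)$ for heavy $v$. The missing ingredient---and it is precisely the \emph{first} half of the paper's proof---is a separate embedding argument applied to a single vertex rather than to a bad edge: if $v$ has $r:=b(v)$ neighbours in its own class, then by max-cut it has $\ge r$ neighbours in every class; the $k$-partite graph on $r$ such neighbours from each class (cross-edges only) must be $T_k(kt)$-free (else adjoining $v$ yields a copy of $H$), and Proposition~\ref{crux} then forces $\ge r^2/2$ missing cross-edges among them, giving $r=O(\sqrt{M})$. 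With this step added, your bootstrap works: from $M=o(n^2)$ one gets $B=O(M^{3/2}/n)=o(M)$, hence $M\le f(n)+B=(1+o(1))f(n)$ and finally $B=O(f(n)^{3/2}/n)$.
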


As we will see below, for many $H$ (including $K_{k+1}$) the bound in Theorem \ref{stable} is optimal up to a constant factor; in many other cases we will be able to prove a stronger bound. In order to discuss this, we will need some definitions.

For disjoint sets $A$, $B$ of vertices we write $K[A,B]$ for the edge set $\{ab:a \in A, b\in B\}$ of the complete bipartite graph $K_{A,B}$.  
For a graph $G = (V,E)$, recall that the {\em Mycielskian} \cite{Myc} of $G$ is a graph $M(G)$ with vertex set	
$ V \cup V' \cup \{u\}$ (where $V' = \{v' : v \in V\}$) and edge set
$ E \cup \{vw' : vw \in E\} \cup K[V',\{u\}]$. Informally, the Mycielskian of a graph $G$ is the graph attained by adding a copy $v'$ of each vertex $v$ in $G$ (where $v'$ is adjacent to $\Gamma_G(v)$, but not to copies of other vertices) and then adding a new vertex adjacent to all copies. For example, the Mycielskian of an edge is the pentagon.

We define the
{\em blown-up Mycielskian graph $M_k(a,b,c)$} as follows.  Let $V_1,\dots,V_k$ be sets of size $a$, let 
$W_1,\dots,W_k$ be sets of size $b$, and let $U$ be a set of size $c$ (and let all these sets be disjoint).  Then $M_k(a,b,c)$ has vertex set
$\bigcup_{i=1}^kV_i\cup\bigcup_{i=1}^kW_i\cup U$ and edge set 
$$\bigcup_{i \neq j} K[V_i, V_j] \cup \bigcup_{i \neq j} K[V_i, W_j] \cup \bigcup_{i} K[W_i,U].$$
Note that $M_k(a,b,c)$ is a blowup of the graph $M(K_k)$. Indeed, from $M(K_k)$ (with vertex set $V\cup V'\cup\{u\}$), one obtains $M_k(a,b,c)$ by taking $a$ copies of each vertex in $V$, then $b$ copies of each vertex in $V'$, and then $c$ copies of $u$.

\begin{minipage}{0.45\textwidth}
\centering
\begin{tikzpicture}[thick,scale=0.6, every node/.style={scale=0.6}]
	\node[label = $u$,vertex] at (0,0) (x2) {} ;
	\node[label = $v_1$,vertex,position=180:{3.5} from x2] (y4) {} ;
	\node[label = $v_2$,vertex,position=90:{2.2} from y4] (y2) {} ;
	\node[label = below:{$v_3$},vertex,position=90:{-2.8} from y4] (y6) {} ;
	\node[position=180:{3.5} from y4] (dr) {} ;
	\node[label = $v'_1$,vertex,position=180:{1} from dr] (z2) {} ;
	\node[label = $v'_2$,vertex,position=90:{1.7} from dr] (z1) {} ;
	\node[label = below:{$v'_3$},vertex,position=90:{-2.2} from dr] (z3) {} ;
	\draw (y2) -- (z2) ;
	\draw (y2) -- (z3) ;
	\draw (y4) -- (z1) ;
	\draw (y4) -- (z3) ;
	\draw (y6) -- (z2) ;
	\draw (y6) -- (z1) ;
	\draw (z1) -- (z2) ;
	\draw (z2) -- (z3) ;
	\draw (z3) -- (z1) ;
	\draw (x2) -- (y2) ;
	\draw (x2) -- (y4) ;
	\draw (x2) -- (y6) ;
\end{tikzpicture}
\captionsetup{font=footnotesize}

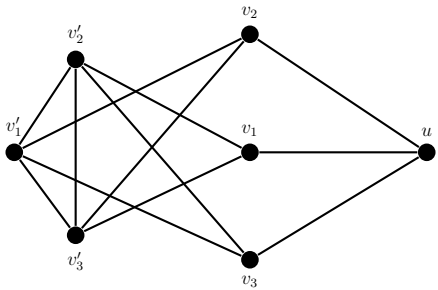
\captionof{figure}{Mycielskian of $K_3$ ($M_3(1,1,1)$)}\label{neartriangle}
\end{minipage}
\begin{minipage}{0.45\textwidth}
\centering
\begin{tikzpicture}[thick,scale=0.6, every node/.style={scale=0.6}]
	\node at (0,0) (x2) {} ;
	\node[vertex,position=90:{0.8} from x2] (x1) {} ;
	\node[vertex,position=90:{-1.2} from x2] (x3) {} ;
	\node[position=180:{4} from x2] (dl) {} ;
	\node[vertex,position=90:{0.2} from dl] (y3) {} ;
	\node[vertex,position=90:{2.2} from dl] (y2) {} ;
	\node[vertex,position=90:{3.2} from dl] (y1) {} ;
	\node[vertex,position=90:{-0.8} from dl] (y4) {} ;
	\node[vertex,position=90:{-2.8} from dl] (y5) {} ;
	\node[vertex,position=90:{-3.8} from dl] (y6) {} ;
	\node[position=180:{4} from dl] (dr) {} ;
	\node[vertex,position=180:{1} from dr] (z2) {} ;
	\node[vertex,position=90:{1.7} from dr] (z1) {} ;
	\node[vertex,position=90:{-2.2} from dr] (z3) {} ;
	\draw (y1) -- (z2) ;
	\draw (y1) -- (z3) ;
	\draw (y2) -- (z2) ;
	\draw (y2) -- (z3) ;
	\draw (y3) -- (z1) ;
	\draw (y3) -- (z3) ;
	\draw (y4) -- (z1) ;
	\draw (y4) -- (z3) ;
	\draw (y5) -- (z2) ;
	\draw (y5) -- (z1) ;
	\draw (y6) -- (z2) ;
	\draw (y6) -- (z1) ;
	\draw (z1) -- (z2) ;
	\draw (z2) -- (z3) ;
	\draw (z3) -- (z1) ;
	\draw (x1) -- (y1) ;
	\draw (x1) -- (y2) ;
	\draw (x1) -- (y3) ;
	\draw (x1) -- (y4) ;
	\draw (x1) -- (y5) ;
	\draw (x1) -- (y6) ;
	\draw (x3) -- (y1) ;
	\draw (x3) -- (y2) ;
	\draw (x3) -- (y3) ;
	\draw (x3) -- (y4) ;
	\draw (x3) -- (y5) ;
	\draw (x3) -- (y6) ;
\end{tikzpicture}
\captionsetup{font=footnotesize}

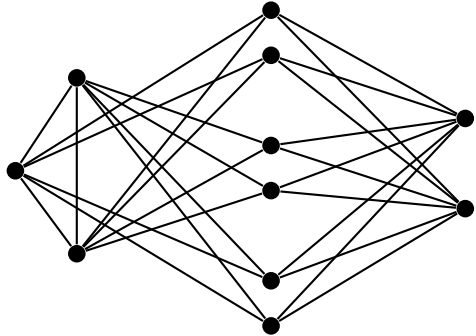
\captionof{figure}{$M_3(1,2,2)$}\label{neartriangle3}
\end{minipage}

The optimal error bound in Theorem \ref{stable} turn out to depend on whether $H$ is a subgraph of some blown-up Mycielskian graph $M_k(a,b,c)$ (note that here it is enough just to consider the case $a = b = c = |H|$. If $H$ is not contained in one of these blow-ups, then the bound in Theorem \ref{stable} is tight up to a constant factor.

\begin{prop}\label{counter1}
Let $H$ be a graph with a critical edge and $\chi(H)$ = $k+1 \ge 3$, and let $f(n) = o(n^2)$ be a function with $f(n) \ge 2n$. Suppose that $H$ is not a subgraph of $M_k(a,a,a)$ where $a=|V(H)|$. Then there is an $H$-free graph $G$ with $n$ vertices and at least $t_k(n) - f(n)$ edges which cannot be made $k$-partite by deleting $o(n^{-1}f(n)^{3/2})$ edges.
\end{prop}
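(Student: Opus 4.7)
The plan is to take $G$ to be a suitably sized blow-up of the Mycielskian, namely $G := M_k(a,b,c)$. First I observe that any blow-up of $M(K_k)$ is $H$-free: if $\phi\colon V(H)\hookrightarrow V(M_k(A,B,C))$ were an embedding, then at most $|V(H)|=a$ vertices of $H$ land in each of the $2k+1$ blow-up classes, so $\phi$ factors through $M_k(a,a,a)$, contradicting the hypothesis.

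Next I would choose the sizes. Set $b = \lceil \beta f(n)^{1/2}\rceil$ and $c = \lceil \alpha f(n)/n\rceil$ for sufficiently small positive constants $\alpha,\beta$ depending only on $k$, then let $a = \lfloor(n-kb-c)/k\rfloor$, adjusting a few individual class sizes by $\pm 1$ so that $G$ has exactly $n$ vertices. The hypothesis $f(n)\ge 2n$ ensures $c\ge 1$. Substituting $a = n/k - b - c/k + O(1)$ into $e(M_k(a,b,c)) = \binom{k}{2}a^2 + k(k-1)ab + kbc$ yields, after simplification,
\[
t_k(n) - e(G) \;=\; \tfrac{k-1}{k}\, nc \;+\; \binom{k}{2} b^2 \;-\; kbc \;+\; O(n),
\]
which for $\alpha,\beta$ small enough is at most $f(n)$, while $bc = \alpha\beta f(n)^{3/2}/n = \Theta(f(n)^{3/2}/n)$.

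It remains to show that every $k$-partition of $G$ has at least $\Omega(bc)$ monochromatic edges. For any $k$-coloring $\chi$ of $G$, define an auxiliary coloring $\chi'\colon V(M(K_k))\to[k]$ by letting $\chi'(w)$ be any color $j\in[k]$ such that at least a $1/k$-fraction of the blow-up class of $w$ is assigned color $j$ by $\chi$ (such a $j$ exists by pigeonhole). Since $\chi(M(K_k)) = k+1$, the coloring $\chi'$ cannot be proper, so there is an edge $xy\in E(M(K_k))$ with $\chi'(x)=\chi'(y)=:j$. Writing $B_x,B_y$ for the blow-up classes of $x,y$ in $G$, the complete bipartite subgraph on $B_x\cap\chi^{-1}(j)$ and $B_y\cap\chi^{-1}(j)$ lies in $G$ and is entirely monochromatic under $\chi$, giving at least $|B_x||B_y|/k^2$ monochromatic edges. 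The minimum of $|B_x||B_y|$ over edges of $M(K_k)$ is $bc$, attained at the edges from the central vertex to $V'$, so $\chi$ leaves at least $bc/k^2 = \Omega(f(n)^{3/2}/n)$ edges monochromatic.

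The main obstacle is the bookkeeping in the second step: one must calibrate $\alpha,\beta$ so that the positive leading contributions $(k-1)nc/k$ and $\binom{k}{2}b^2$ together fit under $f(n)$ with enough slack to absorb the $O(n)$ rounding and the negative $O(c^2)$ correction, while preserving $bc = \Omega(f(n)^{3/2}/n)$. This is a routine optimization given the separation of scales $a\gg b\gg c$ and the hypothesis $f(n)\ge 2n$; once carried out, the $H$-freeness and the distance lower bound fall out immediately from the structural arguments above.
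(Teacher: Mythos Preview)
Your proposal is correct and follows essentially the same construction as the paper: take $G=M_k\bigl(\Theta(n/k),\Theta(f(n)^{1/2}),\Theta(f(n)/n)\bigr)$, observe $H$-freeness from the blow-up structure, check the edge count, and show that any $k$-partition leaves $\Omega(bc)=\Omega(f(n)^{3/2}/n)$ monochromatic edges. The only (inessential) differences are that the paper verifies the last point by choosing a random vertex from each of the $2k+1$ blocks to locate a surviving copy of $M_k(1,1,1)$ rather than via your plurality-colour reduction to $M(K_k)$, and that you should rename one of your two clashing uses of the symbol~$a$.
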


It is natural to ask what happens when $f$ is very small. It follows from a result of Simonovits \cite[p. 282]{Sim4} that for sufficiently large $n$ and $f(n) < \frac{n}{k} - O(1)$, any $H$-free graph with $n$ vertices and at least $t_k(n) - f(n)$ edges is already $k$-partite. On the other hand, the bound in Theorem \ref{escrit} is still sharp in this range. Indeed, as noted above, unbalancing the class sizes in the Tur\'{a}n graph so that one class is $f(n)^{1/2}$ larger than the rest gives a graph with $e(G) \ge t_k(n) - f(n)$ which requires the addition and deletion of $\Theta(f(n)^{1/2}n)$ edges to form $T_k(n)$.

If $H$ {\em is} a subgraph of some blown-up Mycieslkian $M_k(a,b,c)$ then the construction used to prove Proposition \ref{counter1} can no longer be used.  However, we do have the following general lower bound
that holds for all graphs $H$ with a critical edge.

\begin{prop}\label{propcount1}
Let $H$ be a graph with a critical edge and $\chi(H)$ = $k+1 \ge 3$, and let $f(n) = o(n^2)$ be a function with $f(n) \ge 2n$. Then there is an $H$-free graph $G$ with $n$ vertices and at least $t_k(n) - f(n)$ edges which cannot be made $k$-partite by deleting $o(n^{-2}f(n)^2)$ edges.
\end{prop}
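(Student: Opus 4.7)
The plan is to construct an $H$-free graph $G$ on $n$ vertices with $e(G)\ge t_k(n)-f(n)$ that requires $\Omega(n^{-2}f(n)^2)$ edge deletions to become $k$-partite. The gadget is a blow-up of a long odd cycle, planted inside one class of $T_k(n)$. Set $h=|V(H)|$, let $F=C_{2h+1}$, $s=2h+1$, and choose $r=\lfloor cf(n)/n\rfloor$ for a constant $c=c(H,k)>0$ small enough that $srn/k\le f(n)$. (For very small $f(n)$ where $r$ would degenerate to $0$, one replaces this with a single constant-sized $H$-free non-$k$-partite gadget, which gives the required constant lower bound.) Starting from $T_k(n)$ with parts $V_1,\dots,V_k$, pick pairwise disjoint subsets $A_1,\dots,A_s\subseteq V_1$ of size $r$, let $U=\bigcup_iA_i$, and form $G$ by adding the edges of the blow-up $F(r)$ (the complete bipartite graphs $K[A_i,A_{i+1}]$ for $i\bmod s$) and deleting every edge of $K[U,V_2]$. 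A direct calculation gives $e(G)\ge t_k(n)-f(n)$.

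For $H$-freeness, suppose $\phi:H\hookrightarrow G$, set $V_l^*=\phi^{-1}(V_l)$ and $U^*=\phi^{-1}(U)\subseteq V_1^*$. Each $V_l^*$ with $l\ge 2$ is independent in $H$, every $H$-edge inside $V_1^*$ has both endpoints in $U^*$ (its image lying in the blow-up), and no $H$-edge joins $U^*$ to $V_2^*$. The graph $H[U^*]$, viewed as a subgraph of $F(r)$ on at most $h$ vertices, is bipartite: any odd cycle would project via $\phi$ and the quotient $A_i\mapsto i$ to a closed odd walk of length at most $h$ in $F$, yielding an odd cycle of length at most $h<2h+1$ in the cycle $C_{2h+1}$, a contradiction. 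Fixing a bipartition $U^*=B_1\sqcup B_2$, the partition
\[
(V_1^*\setminus U^*)\cup B_1,\quad V_2^*\cup B_2,\quad V_3^*,\quad\dots,\quad V_k^*
\]
is a proper $k$-coloring of $H$, contradicting $\chi(H)=k+1$.

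For the distance lower bound, take a $k$-partition $P_1,\dots,P_k$ of $V(G)$ minimising internal edges. Tur\'an stability forces $P_l$ to agree with $V_{\tau(l)}$ (for some permutation $\tau$) up to $o(r)$ vertices, since moving a non-$U$ vertex across classes creates $\Theta(n)\gg r^2$ internal edges. Placing any $A_i$ in the class corresponding to $V_l$ with $l\ge 3$ would make $K[A_i,V_l]$ entirely internal at cost $\Theta(rn)\gg r^2$, so each $A_i$ lies essentially in the class of $V_1$ or of $V_2$. This induces a two-coloring $\sigma:V(F)\to\{1,2\}$. Since $F=C_{2h+1}$ is a non-bipartite odd cycle, some edge $ij$ of $F$ is $\sigma$-monochromatic, and the blow-up contributes $(1-o(1))r^2$ internal edges. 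Hence the distance to $k$-partite is at least $(1-o(1))r^2=\Omega(n^{-2}f(n)^2)$, as required. The most delicate step is the $H$-freeness argument: the bipartite structure of $H[U^*]$, extracted from the large odd girth of $C_{2h+1}$, is precisely what converts any putative embedding into a forbidden $k$-coloring of $H$.
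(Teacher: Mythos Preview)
Your construction is genuinely different from the paper's and essentially correct, though the lower-bound step is under-argued.

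\textbf{Comparison.} The paper plants an $N$-layer blown-up Mycielskian $M_k^{(N)}\bigl(\tfrac{n-(Nk+1)s}{k},s,s\bigr)$ with $s=\tfrac{f(n)}{2n}$ and $N=|V(H)|$. Its $H$-freeness is a one-line counting argument: any subgraph with chromatic number $k+1$ must meet both the ``$U$'' end and the ``$V_i$'' end and therefore has more than $N$ vertices. Its lower bound is an averaging argument: pick a random vertex from each block; if no block has lost a constant fraction of its edges, a copy of $M_k^{(N)}(1,1,1)$ survives, contradicting $k$-partiteness. Your construction instead plants a blow-up of $C_{2h+1}$ inside one Tur\'an class and severs its edges to $V_2$; the $H$-freeness argument via odd girth and the resulting $k$-colouring of $H$ is clean and correct. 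What the paper's route buys is that both the $H$-freeness and the distance-from-$k$-partite bounds are essentially immediate; what your route buys is a more elementary gadget (no layered Mycielskians).

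\textbf{Gap.} Your lower-bound paragraph needs more care. The sentence ``Tur\'an stability forces $P_l$ to agree with $V_{\tau(l)}$ up to $o(r)$ vertices, since moving a non-$U$ vertex across classes creates $\Theta(n)\gg r^2$ internal edges'' is not a proof: the comparison $n\gg r^2$ fails once $f(n)\gg n^{3/2}$, and in any case a one-vertex local argument does not bound the symmetric difference of an \emph{arbitrary} optimal partition. What you actually need (and what is true) is the following: since the partition $V_1,\dots,V_k$ already has only $O(r^2)$ internal edges, the optimal partition has at most $O(r^2)$; restricting to the complete $k$-partite subgraph on $V(G)\setminus U$ and counting, one gets that the number of ``misplaced'' vertices is $O(r^2/n)=o(r)$, and this forces a bijection $\tau$. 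The same count then shows at most $o(r)$ vertices of $U$ land in $P_{\tau^{-1}(l)}$ for $l\ge 3$, after which your odd-cycle two-colouring argument gives $\Omega(r^2)$ monochromatic blow-up edges. This is routine to write out, but as stated your justification is circular. A minor secondary point: your parenthetical for ``very small $f(n)$'' is not quite enough, since even with $r=1$ the deleted set $K[U,V_2]$ has size $(2h+1)|V_2|$, which can exceed $f(n)$ when $f(n)$ is close to $2n$ and $k$ is small; you should spell out the alternative gadget.
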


Note that this is much weaker than the bound given in Proposition \ref{counter1}.  However, if $H$ is contained in $M_k(a,a,1)$ for some $a$, then the bound in Theorem \ref{stable} can be substantially strengthened. Indeed, for this class of graphs it turns out that Proposition \ref{propcount1} is in fact tight to within a constant factor.

\begin{thm}\label{stable2}
Let $H$ be a graph with a critical edge and $\chi(H)$ = $k+1 \ge 3$, and 
suppose that $H$ is a subgraph of $M_k(a,a,1)$ for some $a$.
Let $f(n) = o(n^2)$ be a function.
If $G$ is an $H$-free graph on $n$ vertices with $e(G) \ge t_k(n) - f(n)$ then $G$ can be made $k$-partite by deleting $O(n^{-2}f(n)^2)$ edges.
\end{thm}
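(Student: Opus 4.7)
My plan is first to apply Theorem~\ref{escrit} to locate a $k$-partition $V_1,\dots,V_k$ of $V(G)$ within $O(f(n)^{1/2}n)$ edges of $T_k(n)$, then pass to the $k$-partition minimising intra-class edges (equivalently, the max-cut partition), which can only reduce $r := \sum_i e(V_i)$. Writing $d_j(v) = |N(v)\cap V_j|$ and $i(v)$ for the class index of $v$, the max-cut property gives $d_{i(v)}(v) \le d_j(v)$ for every $v$ and every $j$. Theorem~\ref{stable} supplies the a priori bound $r = O(n^{-1}f(n)^{3/2}) = o(f(n))$. Setting $\delta(v) := \sum_{j\ne i(v)}(|V_j| - d_j(v)) + d_{i(v)}(v)$, a direct edge count using $e(G)\ge t_k(n)-f(n)$ and $\sum_i|V_i|^2\ge n^2/k$ yields $\sum_v\delta(v)\le 2f(n)+4r = O(f(n))$. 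For a small constant $\varepsilon > 0$ (depending on $|V(H)|$ and $k$), set $B = \{v:\delta(v)\ge\varepsilon n\}$, so $|B| = O(f(n)/n)$.

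The goal is to show that every intra-class edge of $G$ has both endpoints in $B$; granting this, $r \le \binom{|B|}{2} = O(n^{-2}f(n)^2)$ as required. This reduces to an embedding lemma: \emph{for any intra-class edge $xy$ of $G$ with at least one endpoint outside $B$, $G$ already contains a copy of $H$}. To prove it, fix a critical edge $e=pq$ of $H$ and a proper $k$-colouring $\chi'$ of $H-e$ with $\chi'(p)=\chi'(q)=1$, and write $A_i = (\chi')^{-1}(i)$. The hypothesis $H\subseteq M_k(a,a,1)$ yields an embedding $\phi_H$ with $\phi_H(q) = u$ and $\phi_H(p)\in W_1$, and this embedding has two crucial features: (a) $p$ is the unique $A_1$-neighbour of $q$ in $H$ (since $A_1$ is independent in $H-e$), and (b) $N_H(p) \cap N_H(q) = \emptyset$, because in $M_k(a,a,1)$ the sets $N(u) = \bigcup_i W_i$ and $N(\phi_H(p)) = \bigcup_{i\ne 1} V_i \cup \{u\}$ are disjoint. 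I then construct $\phi: H\to G$ greedily: set $\{\phi(p),\phi(q)\} = \{x,y\}$, fix a permutation $\sigma$ of $\{2,\dots,k\}$, and place each remaining $w\in A_i$ into $V_{\sigma(i)}\setminus B$ in the common neighbourhood of its already-placed $H$-neighbours. Property (b) ensures that no successor is simultaneously constrained by $\phi(p)$ and $\phi(q)$, so bad-side and good-side constraints never compound.

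The main obstacle is that when the ``bad'' endpoint, say $x$, lies in $B$, its defect may arise from $x$ having a severely depleted inter-class neighbourhood in some direction $j_0$, potentially squeezing out greedy steps for successors of $p$ whose colour maps to $j_0$ under $\sigma$. To handle this, I refine $B$ by separating off the ``super-bad'' vertices $B^\star := \{v : d_j(v) < a \text{ for some } j\ne i(v)\}$; each $v\in B^\star$ forces $\delta(v) = \Omega(n)$, so $|B^\star| = O(f(n)/n)$ and intra-class edges with both endpoints in $B^\star$ already contribute only $O(n^{-2}f(n)^2)$ by the binomial count. For $x \in B \setminus B^\star$ one has $d_j(x)\ge a$ for every $j$, and combining the bound $|N_H(p)\cap A_j|\le a$ coming from $H\subseteq M_k(a,a,1)$ with a careful choice of $\sigma$ routing the colour classes of $N_H(p)\setminus\{q\}$ to the least-depleted inter-class directions of $x$, the greedy embedding should still succeed. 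Verifying that this routing argument is feasible for every possible defect pattern of $x$, and dovetails cleanly with the constant $\varepsilon$ chosen above, is the most delicate step.
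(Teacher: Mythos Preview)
Your plan diverges from the paper's argument in a way that creates a genuine gap. The paper does \emph{not} try to show that every intra-class edge has both endpoints in the bad set. Instead, building on Theorem~\ref{stable}, it already knows that every intra-class edge meets the low-degree set $S$ with $|S|=O(f(n)/n)$, and then proves the second bound it needs: the \emph{maximum intra-class degree} is $O(f(n)/n)$. Multiplying gives $|E_I|\le |S|\cdot O(f(n)/n)=O(f(n)^2/n^2)$. The key step is a dichotomy applied to a vertex $v$ of largest intra-class degree: setting $v$ in the role of the apex $u$ of $M_k(h,h,1)$, one looks at $C_i:=\Gamma(v)\cap V_i\setminus S$. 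If $|C_i|\ge h$ for every $i$, one picks $h$-sets $D_i\subset C_i$ of \emph{high-degree} neighbours, finds a $T_k(kh)$ in their joint neighbourhood by Proposition~\ref{crux}, and hence embeds $M_k(h,h,1)\supseteq H$ --- contradiction. Otherwise $|C_j|<h$ for some $j$, whence $|\Gamma(v)\cap V_j|<h+|S|=O(f(n)/n)$, and the max-cut property forces the intra-class degree of $v$ to be at most this.

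Your embedding lemma aims for the stronger statement that an intra-class edge $xy$ with $y\notin B$ already forces a copy of $H$, and this is precisely where your argument breaks. When you set $\phi(p)=x$ (or $\phi(q)=x$) with $x\in B$, you must place the $H$-neighbours of that vertex into $N_G(x)\cap V_j\setminus B$ so that the greedy construction can continue from good vertices. Your $B^\star$ threshold only guarantees $d_j(x)\ge a$; it says nothing about how many of those $\ge a$ neighbours lie \emph{outside} $B$. It is entirely possible that $N_G(x)\cap V_j\subseteq B$ (indeed this is exactly the scenario $|C_j|<h$ in the paper's dichotomy), and then no choice of $\sigma$ rescues the greedy step, since $p$ (and likewise $q$) may have neighbours in every colour class $A_2,\dots,A_k$. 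Your remark that ``intra-class edges with both endpoints in $B^\star$ already contribute $O(n^{-2}f(n)^2)$'' is a non sequitur here: the edge under consideration has $y\notin B\supseteq B^\star$, so it is not covered by that count. In short, the paper's dichotomy is essential --- when the embedding fails one does not still get $H$, one gets the degree bound instead --- and replacing it by an ``always embed'' strategy cannot work without new ideas.
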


Theorems \ref{stable} and \ref{stable2} give bounds that are sharp to within a constant factor when $H$ is a subgraph of some $M_k(a,a,1)$ or when $H$ is not contained in any $M_k(a,a,a)$.  
What about graphs that are contained in some $M_k(a,a,a)$ but are not contained in any $M_k(a,a,1)$?  In this case, we do not have sharp results, but can say a little.

\begin{thm}\label{stableII}
Let $H$ be a graph with a critical edge and $\chi(H)$ = $k+1 \ge 3$, and 
suppose that $H$ is a subgraph of $M_k(t,b,a)$.
Let $f(n) = o(n^2)$ be a function.
If $G$ is an $H$-free graph on $n$ vertices such that $e(G) \ge t_k(n) - f(n)$ then $G$ can be made $k$-partite by deleting $O(n^{\frac{1}{bk}}f(n)^{1-\frac{1}{bk}})$ edges.
\end{thm}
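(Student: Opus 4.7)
The plan is to follow the same general strategy as in the proofs of Theorems~\ref{stable} and~\ref{stable2}, but to exploit the stronger structural assumption $H \subseteq M_k(t,b,a)$ via a K\H{o}v\'{a}ri--S\'{o}s--Tur\'{a}n style argument on the non-edge graph across parts.

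First, I would apply Theorem~\ref{escrit} to obtain a $k$-partition $V_1,\dots,V_k$ of $V(G)$ with part sizes $n/k+O(f(n)^{1/2})$ whose symmetric difference from $T_k(n)$ has $O(n f(n)^{1/2})$ edges. Let $r$ denote the number of ``bad'' edges of $G$ inside parts, and $m$ the number of non-edges across parts; the inequality $e(G)\ge t_k(n)-f(n)$ together with $\sum_{i<j}|V_i||V_j|\le t_k(n)$ yields $m\le f(n)+r$. I would then clean up by discarding the $O((r+f(n))/n)$ vertices with too many non-neighbors across parts; this reduces $r$ by at most a constant factor, and ensures that every remaining vertex of $V_i$ has close to $|V_j|$ neighbors in each $V_j$ for $j\ne i$. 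The goal is to prove $r\le Cn^{1/(bk)}f(n)^{1-1/(bk)}$ for a constant $C=C(H)$; assuming for contradiction that $r$ exceeds this threshold, I would embed $M_k(t,b,a)\supseteq H$, contradicting $H$-freeness.

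The central step is to find the ``middle'' of the Mycielskian blow-up: vertices $W_i\subseteq V_i$ with $|W_i|=b$ for each $i$, such that $W_i\cup W_j$ is independent in $G$ for all $i\ne j$. Equivalently, one seeks a $k$-partite $K_{b,\dots,b}$ in the crossing non-edge graph, which has at most $f(n)+r$ edges. A K\H{o}v\'{a}ri--S\'{o}s--Tur\'{a}n style argument, implemented either by iterated dependent random choice over the $k$ parts or by a direct extremal bound for $k$-partite $K_{b,\dots,b}$-free graphs, produces such a structure provided $r$ exceeds the stated threshold; the exponent $1/(bk)$ arises because one picks $b$ vertices in each of $k$ layers, conditioning each layer on all previous choices. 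Given the middle, the ``bottom'' $V_i^{\mathrm{blow}}$ of size $t$ is drawn from typical vertices in $V_i\setminus W_i$, using that almost every cross-pair is an edge. The ``apex'' of size $a$ is placed entirely in one part, say $V_1$, and requires bad edges from the apex vertices to $W_1$ together with crossing non-edges from the apex vertices to $V_j^{\mathrm{blow}}$ for $j\ne 1$; both are plentiful under the contradiction hypothesis, which lets us choose $a$ such apex vertices.

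The main obstacle is the K\H{o}v\'{a}ri--S\'{o}s--Tur\'{a}n step: the threshold exponent $1/(bk)$ has to be matched precisely, and the interplay between the bad-edge count $r$ and the non-edge count (bounded by $r+f(n)$) must be tracked carefully so that no parameter blows up. A secondary difficulty is constructing the apex, which simultaneously needs bad edges incident to $W_1$ and crossing non-edges to the remaining bottom classes; the same ``defect budget'' $r+f(n)$ must cover both, which is likely the reason why the argument does not give a sharp result when $H$ is contained in some $M_k(a,a,a)$ but not in any $M_k(a,a,1)$.
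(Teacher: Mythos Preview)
Your central step is aimed at the wrong structure. To embed $H\subseteq M_k(t,b,a)$ in $G$ you only need a \emph{supergraph} of $M_k(t,b,a)$, so the non-adjacencies of the Mycielskian (between $W_i$ and $W_j$, and between $U$ and the $V_j$) are irrelevant: there is no need for your middle sets $W_i$ to be cross-independent, nor for the apex to have non-edges to the bottom classes. More seriously, a K\H{o}v\'ari--S\'os--Tur\'an argument on the crossing non-edge graph cannot produce the threshold you want. That graph has at most $m\le r+f(n)=O(f(n))$ edges (by Lemma~\ref{stableb}), and for most $f(n)$ this is far too few to force even a $K_{b,b}$ between two parts of size $\Theta(n)$, let alone a $k$-partite $K_{b,\dots,b}$; the contradiction hypothesis $r>Cn^{1/(bk)}f(n)^{1-1/(bk)}$ does nothing to increase $m$. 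And even if such $W_i$'s were found, fixing $W_1\subseteq V_1$ first and then seeking $a$ apex vertices in $V_1$ each joined to all of $W_1$ by bad edges is hopeless, since the bad edges need not cluster on any prescribed $b$-set.

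The paper reverses the order: find the apex first, then the middle as its common neighbourhood. From the proof of Theorem~\ref{stable} one has a set $S$ of $O(f(n)/n)$ low-degree vertices meeting every bad edge, and $r\le\sum_{s\in S}e_I(s)+O(f(n)^2/n^2)$ where $e_I(s):=\min_i|\Gamma(s)\cap V_i|$. The K\H{o}v\'ari--S\'os--Tur\'an step is applied to the \emph{neighbourhoods} of vertices in $S$, not to non-edges: one double-counts pairs $\bigl(s,(D_1,\dots,D_k)\bigr)$ with $s\in S$ and each $D_i\in\binom{V_i}{b}$ contained in $\Gamma(s)$. If some tuple $(D_1,\dots,D_k)$ is hit by $a$ vertices of $S$, those $a$ vertices form the apex $U$ and the $D_i$ form the middle $W_i$; the bottom sets of size $t$ are then found exactly as in the proof of Theorem~\ref{stable2}, yielding a copy of $M_k(t,b,a)\supseteq H$. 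Otherwise the total count is $O(n^{bk})$, giving $\sum_{s\in S}e_I(s)^{bk}=O(n^{bk})$, and H\"older against $|S|=O(f(n)/n)$ yields $\sum_{s\in S}e_I(s)=O\bigl((f(n)/n)^{1-1/(bk)}\cdot n\bigr)=O\bigl(n^{1/(bk)}f(n)^{1-1/(bk)}\bigr)$, which is the required bound on $r$.
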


Note that the bound in Theorem \ref{stableII} is stronger than the bound in Theorem \ref{stable} when $f(n) \gg n^{2-\frac{2}{bk+2}}$. This shows that the upper bound $O(n^{-1}f(n)^{3/2})$ in Theorem \ref{stable} isn't tight when $H$ is contained in some $M_k(a,a,a)$ but is not contained in any $M_k(a,a,1)$. However, we will give examples in Section \ref{gap} showing that these graphs need not satisfy the stronger $O(n^{-2}f(n)^2)$ bound of Theorem \ref{stable2}. We discuss this further in the conclusion.

The paper is organised as follows. In Section \ref{sec3} we give proofs of Theorems \ref{escrit}, \ref{stable}, \ref{stable2} and \ref{stableII}. In Section \ref{countergraphs}, we will prove Propositions \ref{counter1} and \ref{propcount1} by way of constructions. In Section \ref{gap} we discuss the gap between the upper bound given by Theorem \ref{stableII} and the lower bound given by Proposition \ref{propcount1}, and conclude the paper with some related problems and open questions. 

Related results can be found in papers by Norin and Yepremyan \cite{Liana, Liana2}, and Pikhurko, Sliacan and Tyros \cite{Oleg}. Finally, we note that results from this paper are applied in a joint paper with Natasha Morrison \cite{cycles}.

For the duration of this paper, we write $a(n) = o(b(n))$ to mean $\frac{a(n)}{b(n)} \rightarrow 0$ as $n \rightarrow \infty$. We also use the notation $a(n) = O(b(n))$ if there is some constant $C$ such that $|a(n)| \le C|b(n)|$ for all $n$, and $a(n) = \Omega(b(n))$ if $b(n) = O(a(n))$.

\section{Upper Bounds}\label{sec3}
In this section we present our proofs of Theorems \ref{escrit}, \ref{stable}, \ref{stable2} and \ref{stableII}. We start by recalling some important results that will be key to our argument. The first is the Erd\H{o}s-Stone Theorem \cite{erd-stone} concerning the extremal number of a complete symmetric $k$-partite graph.

\begin{thm}[Erd\H{o}s-Stone \cite{erd-stone}]
\label{erdstone}
Let $k \ge 2$, $t \ge 1$, and $\epsilon > 0$. Then for $n$ sufficiently large, if $G$ is a graph on $n$ vertices with
$$ e(G) \ge \left(1 - \frac{1}{k-1} + \epsilon\right)\binom{n}{2},$$ 
then $G$ must contain a copy of $T_k(kt)$.
\end{thm}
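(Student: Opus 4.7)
I would prove the theorem by induction on $k$, along the lines of the original argument of Erd\H{o}s and Stone. The base case $k=2$ reduces to $e(G)\ge\epsilon\binom{n}{2}$, and since $T_2(2t)=K_{t,t}$ the conclusion follows from the K\H{o}v\'{a}ri-S\'{o}s-Tur\'{a}n theorem, which produces $K_{t,t}$ in any graph with $\Omega(n^2)$ edges provided $n$ is large in terms of $t$.

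For the inductive step, suppose the result holds for $k-1$. Given $k\ge 3$, $t$ and $\epsilon$, pick a large integer $T=T(k,t,\epsilon)$. Since $1-\frac{1}{k-1}+\epsilon\ge 1-\frac{1}{k-2}+\frac{\epsilon}{2}$ for $k\ge 3$, the inductive hypothesis (with parameters $k-1$, $T$ and $\epsilon/2$) guarantees a copy of $T_{k-1}((k-1)T)$ in $G$; write $A_1,\dots,A_{k-1}$ for its parts and set $B=V(G)\setminus\bigcup_i A_i$.

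The crucial step is to extract a $k$th part of size $t$ from $B$. Using the density hypothesis (which leaves a surplus of $\epsilon\binom{n}{2}$ edges beyond the $(k-1)$-partite Tur\'{a}n threshold), a cleaning/averaging argument shows that many vertices of $B$ are \emph{rich}, meaning that they have at least $(1-\delta)T$ neighbours in every $A_i$ for some small $\delta=\delta(k,\epsilon)>0$. A K\H{o}v\'{a}ri-S\'{o}s-Tur\'{a}n type double count on an auxiliary bipartite graph between rich vertices and $t$-subsets of each part then yields $t$ rich vertices $v_1,\dots,v_t$ together with $t$-subsets $B_i\subseteq A_i$ contained in $\bigcap_j N(v_j)$. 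The sets $\{v_1,\dots,v_t\}$ and $B_1,\dots,B_{k-1}$ form the parts of a copy of $T_k(kt)$, closing the induction.

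The main obstacle is the parameter hierarchy: $T$ must be much larger than $t$, and $n$ much larger than $T$, so that both the inductive hypothesis applies and the final pigeonhole succeeds in all $k-1$ parts simultaneously; in particular the rich-vertex count needs to dominate $\binom{T}{t}^{k-1}$. A more modern route avoids this bookkeeping via Szemer\'{e}di's regularity lemma: after throwing away edges inside clusters, in irregular pairs, and in pairs of density below $\epsilon/4$, the reduced graph inherits density at least $1-\frac{1}{k-1}+\frac{\epsilon}{2}$, so it contains $K_k$ by Tur\'{a}n's theorem, and the counting/embedding lemma applied to $k$ pairwise regular clusters of positive density delivers the desired $T_k(kt)$.
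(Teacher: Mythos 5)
The paper does not prove Theorem \ref{erdstone}: it is the classical Erd\H{o}s--Stone theorem, quoted with a citation to the original 1946 paper and used as a black box (via Proposition \ref{crux}), so there is no internal proof to compare yours against. Your sketch is essentially the standard textbook proof and is correct in outline: K\H{o}v\'ari--S\'os--Tur\'an for the base case $k=2$ (where $T_2(2t)=K_{t,t}$), induction on $k$ via a copy of $T_{k-1}((k-1)T)$ with $T=T(k,t,\epsilon)$ chosen large, and a pigeonhole over the at most $\binom{T}{t}^{k-1}$ choices of $t$-subsets of the parts. Two details deserve to be made explicit. First, the ``cleaning/averaging argument'' hides a genuinely necessary preliminary step: the copy of $T_{k-1}((k-1)T)$ produced by induction could sit in a sparse region of $G$, so you must first pass to an induced subgraph $G'$ on $n'$ vertices, with $n'\to\infty$ as $n\to\infty$, whose minimum degree is at least $\bigl(1-\frac{1}{k-1}+\frac{\epsilon}{2}\bigr)n'$ (the usual iterative deletion of low-degree vertices); only then does counting the edges leaving $\bigcup_i A_i$ force many vertices to be rich. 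Second, what that averaging actually yields is that at least $c(k,\epsilon)\,n'$ vertices have at least $\gamma T\ge t$ neighbours in \emph{every} $A_i$ for some $\gamma=\gamma(k,\epsilon)>0$; your stronger formulation ``$(1-\delta)T$ neighbours in every $A_i$ for small $\delta$'' is not what the count gives and need not hold, but the weaker statement is all the final pigeonhole requires, so the argument closes once $n'>(t-1)\binom{T}{t}^{k-1}$. The regularity-lemma route you mention is also a correct, standard modern alternative.
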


The second is a theorem proven by Simonovits regarding the extremal graph of a graph with a critical edge.

\begin{thm}[Simonovits {\cite[Theorem 2.3]{Sim1}}]
\label{simthm}
Let $H$ be a graph with a critical edge with $\chi(H) =k+1 \ge 3.$ Then there exists some $n_0$ such that, for all $n \ge n_0$, we have $\Ex(n;H) = \{T_k(n)\}$.
\end{thm}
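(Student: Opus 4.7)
The plan is to combine the Erd\H{o}s--Simonovits stability theorem (Theorem~\ref{e-s}) with a targeted embedding argument that exploits the critical edge of $H$. Since $T_k(n)$ is $k$-chromatic and therefore $H$-free, $\ex(n;H) \geq t_k(n)$. Let $G$ be any extremal $H$-free graph on $n$ vertices; then $e(G) \geq t_k(n)$, and Theorem~\ref{e-s} yields a partition $V(G) = V_1 \cup \cdots \cup V_k$ such that $G$ differs from the complete $k$-partite graph on these classes by $o(n^2)$ edges. A standard cleanup---delete the $o(n)$ vertices of atypical degree and rebalance---lets us assume each part has size $n/k + o(n)$, that every remaining vertex sends at most $o(n)$ edges into its own class, and (by the total degree count) has at most $o(n)$ non-neighbors outside its class.

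The crux is showing $G$ contains no edge inside any $V_i$. Fix a proper $k$-coloring $c$ of $H-e$, where $e = xy$ is the critical edge. Since $\chi(H) = k+1$ but $\chi(H-e) = k$, the endpoints must satisfy $c(x) = c(y)$---otherwise the same coloring would witness $\chi(H) \leq k$---so relabel to get $c(x) = c(y) = 1$. Suppose for contradiction $G$ has an edge $uv$ with both endpoints in some class, which we may take to be $V_1$. I would embed $H-e$ into $G$ by sending $x \mapsto u$, $y \mapsto v$, the remaining vertices of color class $1$ into $V_1$, and each color class $j \geq 2$ into $V_j$, proceeding greedily. At each step the next vertex of $H-e$ to be placed has only a bounded number of already-embedded neighbors (in other classes), each contributing at most $o(n)$ non-neighbors in the target part, so the common neighborhood retains size $\Omega(n)$ and a valid image exists. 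The edge $uv$ then supplies $e$, producing a copy of $H$ in $G$---the desired contradiction. The reason this embedding is possible is precisely that $x, y$ share a color class, allowing the within-part edge $uv$ of $G$ to play the role of the critical edge.

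Once $G$ is $k$-partite with parts $V_1, \ldots, V_k$, extremality pins everything down. Any missing edge between two parts would drop $e(G)$ below $t_k(n)$, and any imbalance in part sizes is penalised by the standard convexity argument that makes $T_k(n)$ the unique balanced $k$-partite graph of maximum size. Therefore $G$ is complete $k$-partite with parts of size $\lceil n/k \rceil$ or $\lfloor n/k \rfloor$, so $G = T_k(n)$. The main obstacle is quantifying the greedy embedding in the second paragraph: the cleanup must be precise enough that every common neighborhood encountered while embedding the $|V(H)|$ vertices remains of size strictly larger than $|V(H)|$, which reduces to careful bookkeeping of the codegree bounds obtained after applying Theorem~\ref{e-s} and removing vertices of bad degree.
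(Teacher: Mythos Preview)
The paper does not give its own proof of Theorem~\ref{simthm}; it is quoted from Simonovits~\cite{Sim1} and used as a black box (for instance in the proof of Lemma~\ref{stableb}, where the inequality $e(J)\le t_k(|J|)$ is obtained by invoking it). So there is nothing in the paper to compare your argument against.

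Assessing your sketch on its own: the strategy is the standard one, and the embedding step---observing that in any proper $k$-colouring of $H-e$ the endpoints of the critical edge must share a colour, then greedily embedding $H$ onto any intra-class edge $uv$ of $G$---is correct once you know that both $u$ and $v$ have only $o(n)$ non-neighbours in every other class. The gap is the cleanup. You delete $o(n)$ vertices of atypical degree, argue that the \emph{remaining} graph has no intra-class edge, and then write ``Once $G$ is $k$-partite\ldots'', silently reverting to the full graph. That switch is unjustified: a priori the intra-class edge $uv$ you want to rule out could have an endpoint among the deleted low-degree vertices, and then your greedy embedding has no traction. The usual repair is an iterated-deletion argument: repeatedly remove any vertex of degree below $(1-\varepsilon)\tfrac{k-1}{k}$ times the current order; each such removal strictly increases the edge surplus over the corresponding $t_k$, while the Erd\H{o}s--Stone bound caps the surplus at $o(n^2)$ and hence the number of removals at $o(n)$; once the process halts, your embedding argument applies and shows the remaining graph is $k$-partite with at most $t_k$ edges, forcing the surplus---and therefore the number of removed vertices---to be zero. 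This is routine but is a genuine step, not merely the ``careful bookkeeping'' you flag at the end.
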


As a stepping stone to Theorem \ref{stable}, we first prove a weaker result. This can also be deduced from independent work of Norin and Yepremyan (\cite[Theorem 3.1]{Liana} and \cite{Liana2}) but we include a self-contained proof here for completeness. The approach is essentially standard (see for example Erd\H{o}s \cite{erd-sim}), but we must keep careful track of the error bounds.

\begin{lem}\label{stableb}
Let $H$ be a graph with a critical edge and $\chi(H)$ = $k+1\ge 3$, and let $f(n) = o(n^2)$ be a function. Suppose that $G$ is an $H$-free graph on $n$ vertices such that $e(G) \ge t_k(n) - f(n)$. Then $G$ can be made $k$-partite by deleting $O(f(n))$ edges.
\end{lem}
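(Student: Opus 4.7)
The plan is to follow the standard two-stage Erd\H{o}s--Simonovits approach: first reduce to a graph with large minimum degree, and then use the high minimum degree together with $H$-freeness to produce a $k$-partition whose bad edges (those lying inside classes) number $O(f(n))$.

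For the \textbf{degree reduction}, fix a small $\epsilon > 0$ depending only on $k$ and iteratively delete any vertex whose current degree is less than $(1 - 1/k - \epsilon)n'$, where $n'$ is the current order. Let $G^*$ on $n^* = n - r$ vertices denote the result. Since $G^*$ is $H$-free, Theorem~\ref{simthm} gives $e(G^*) \le t_k(n^*)$ for $n^*$ large. On the other hand, at most $(1 - 1/k - \epsilon)nr$ edges were removed, so combining with $e(G) \ge t_k(n) - f(n)$ and the elementary estimate $t_k(n) - t_k(n^*) \ge (1 - 1/k)nr - O(n)$ forces $\epsilon\, nr \le f(n) + O(n)$. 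Hence $r = O(f(n)/n)$ and the total number of edges removed is $O(f(n))$; in particular $\delta(G^*) \ge (1 - 1/k - \epsilon)n^*$.

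For the \textbf{partition step}, let $V_1, \ldots, V_k$ be a maximum $k$-cut of $G^*$. The max-cut property gives $d_{V_i}(v) \le d(v)/k$ for every $v \in V_i$, and together with the high minimum degree this forces $|V_i| = (1 + O(\epsilon))n^*/k$. Write $b$ for the number of bad edges inside classes; the goal is to show $b = O(f(n))$. Since $H$ has a critical edge and $\chi(H) = k+1$, one sees that $H$ embeds into $K_{k+1}(t)$ (with $t = |V(H)|$) in such a way that the critical edge lies between two small classes of the blow-up. So it suffices to show that if $b$ is too large, one can combine bad edges with the dense common neighbourhoods --- which for $k \ge 3$ have size $\Omega(n^*)$ across the remaining $V_j$'s --- to build a copy of a suitable blow-up of $K_{k+1}$ containing $H$, via an application of Erd\H{o}s--Stone (Theorem~\ref{erdstone}). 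A supersaturation-style counting argument, weighting each bad edge by how many near-copies of the $H$-forcing blow-up it contributes to, then forces $b = O(f(n))$. For $k = 2$ the common neighbourhood bound degenerates, and one instead uses a direct argument in the spirit of Andr\'asfai--Erd\H{o}s--S\'os, exploiting that graphs of minimum degree close to $n/2$ are close to bipartite.

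\textbf{The hardest part} is obtaining the quantitative bound $b = O(f(n))$ in the partition step: the classical Erd\H{o}s--Simonovits Theorem~\ref{e-s} only gives the qualitative bound $b = o(n^2)$, so one must extract quantitative information from both the high minimum degree and the $H$-freeness (via an embedding of the critical-edge blow-up of $K_{k+1}$) to promote this to a linear dependence on $f(n)$.
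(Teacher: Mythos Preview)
Your degree-reduction step is essentially the same as the paper's: remove vertices of degree below $(1-1/k-\epsilon)n$, bound their number by $O(f(n)/n)$ using $e(G^*)\le t_k(n^*)$, and conclude that only $O(f(n))$ edges are lost. Fine.

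The partition step, however, is where your argument goes soft. You propose a ``supersaturation-style counting argument, weighting each bad edge by how many near-copies of the $H$-forcing blow-up it contributes to'' in order to conclude $b=O(f(n))$. This is not how the argument closes, and as stated it is not clear it can be made to work: different bad edges may well reuse the same missing cross-edges, so a naive double count does not give a linear bound in $f(n)$. The correct endgame (and what the paper does) is sharper and simpler: after the degree reduction, the remaining graph $G^*$ is \emph{exactly} $k$-partite, i.e.\ $b=0$. The point is that a \emph{single} bad edge $uv$ inside some $V_1$, with both endpoints of high degree, already has $\Omega(n)$ common neighbours in every other class $V_2,\ldots,V_k$. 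Restricting to these common neighbourhoods (together with an equally large piece of $V_1$) and deleting the within-class edges gives a $k$-partite graph on $\Theta(n)$ vertices which must be $T_k(kt)$-free (else adjoining $u,v$ gives a copy of $H$ via the critical edge). By Proposition~\ref{crux} this forces $\Omega(n^2)$ missing cross-edges, contradicting $f(n)=o(n^2)$. So no such bad edge exists, and the entire $O(f(n))$ cost comes from the vertex-removal step alone.

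Two further remarks. First, to run the argument you need to know \emph{a priori} that the number of missing cross-edges in your chosen partition is $o(n^2)$; the paper gets this by first invoking the qualitative Erd\H{o}s--Simonovits theorem (Theorem~\ref{e-s}) to obtain a partition with $\le\epsilon n^2$ internal edges, and hence $\le\frac32\epsilon n^2$ missing cross-edges. Your max-cut alone does not immediately give such a bound on missing cross-edges, since you have not yet controlled $b$. Second, your case split at $k=2$ is unnecessary: the common-neighbourhood bound $|\Gamma(u)\cap\Gamma(v)\cap V_2|\ge\Omega(n)$ follows from the minimum-degree condition exactly as for $k\ge 3$, and the paper treats all $k\ge 2$ uniformly.
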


The following easy lemma will be used repeatedly in the proof of Lemma \ref{stableb}.

\begin{prop}\label{crux}
Fix $k \ge 2$ and $t \ge 1$. For $n \ge 1$, suppose $G \subset T_k(kn)$ is a $T_k(kt)$-free graph. Then for sufficiently large $n$,
	\beqs
		e(G) \le t_k(kn) - \frac{n^2}{2}.
	\eeqs
\end{prop}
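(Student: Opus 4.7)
The plan is a direct application of the Erd\H{o}s--Stone Theorem (Theorem \ref{erdstone}), via the contrapositive. I would assume for contradiction that $e(G) > t_k(kn) - n^2/2$ and show that this already forces $e(G)$ to exceed the Erd\H{o}s--Stone threshold for containing $T_k(kt)$, producing the desired copy and hence a contradiction to the $T_k(kt)$-freeness of $G$.

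Since $T_k(kn)$ is balanced, we have the exact identity $t_k(kn) = \binom{k}{2} n^2$. The main technical step is to compare the hypothesised lower bound $t_k(kn) - n^2/2$ with the Erd\H{o}s--Stone threshold $\left(1 - \tfrac{1}{k-1}\right)\binom{kn}{2}$: a short algebraic expansion shows that the former exceeds the latter by a positive quantity of order $n^2$, with an explicit constant depending only on $k$ (the constant $1/(2(k-1))$ turns out to fall out naturally). This is the only place calculation is needed.

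Given this margin of order $n^2$, I would then fix $\epsilon = \epsilon(k) > 0$ small enough that the additive error $\epsilon \binom{kn}{2}$ is dominated by the margin, so that the hypothesis forces $e(G) > \left(1 - \tfrac{1}{k-1} + \epsilon\right)\binom{kn}{2}$. Applying Theorem \ref{erdstone} with this fixed $\epsilon$ (and with $k, t$ fixed as in the statement) yields a copy of $T_k(kt)$ inside $G$ once $n$ is sufficiently large, contradicting the hypothesis.

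I do not anticipate a real obstacle. The only place requiring care is the explicit constant comparison: one needs the identity $t_k(kn) = \binom{k}{2} n^2$ as an exact rather than asymptotic statement, so that the constant $\tfrac12$ in the conclusion is matched precisely, and one needs $\epsilon$ to depend only on $k$ so that the ``$n$ sufficiently large'' threshold in Theorem \ref{erdstone} depends only on the fixed parameters $k$ and $t$.
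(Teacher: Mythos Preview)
Your proposal is correct and is essentially the same argument as the paper's: both apply the Erd\H{o}s--Stone Theorem to bound $e(G)\le \ex(kn;T_k(kt))$ and then compare $t_k(kn)=\binom{k}{2}n^2$ with $(1-\tfrac{1}{k-1}+o(1))\binom{kn}{2}$, obtaining a gap of $\tfrac{k}{2(k-1)}n^2\ge \tfrac{n^2}{2}+\Omega(n^2)$. The only cosmetic difference is that the paper writes this as a direct chain of inequalities $t_k(kn)-e(G)\ge t_k(kn)-\ex(kn;T_k(kt))\ge \tfrac{n^2}{2}$ rather than as a proof by contradiction.
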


\begin{proof}
Let $k \ge 2$ and $n,t \in \bN$ and suppose $G \subset T_k(kn)$ is $T_k(kt)$-free. Then $e(G) \le \ex(kn;T_k(kt))$, so we can apply Theorem \ref{erdstone} to get
	\beqs
		t_k(kn) - e(G) &\ge& t_k(kn) - \ex(kn;T_k(kt)) \nonumber \\
		&\ge& \frac{(kn)^2}{2}\left(1 - \frac{1}{k}\right) - \frac{(kn)^2}{2}\left(1 - \frac{1}{k-1} + o(1)\right) \nonumber \\
		&\ge& \frac{n^2}{2}, \nonumber
	\eeqs
for sufficiently large $n$.
\end{proof}

\begin{proof}[\bf Proof of Lemma \ref{stableb}]
Let $f(n) = o(n^2)$ be a function, and let $H$ be a graph with a critical edge and $\chi(H)$ = $k+1 \ge 3$. Choose $t$ such that $H \subseteq T_k(tk) + e$, where $e$ is any edge inside a vertex class of $T_k(tk)$. Let $\delta, \epsilon, \eta \in (0,\frac{1}{20000k^2})$ with $\epsilon \le \frac{\eta^2}{8}$.

Suppose that $G$ is an $H$-free graph on $n$ vertices with $e(G) \ge t_k(n) - f(n)$. Since $f(n) = o(n^2)$, Theorem \ref{e-s} tells us that there exists some $N_0$ such that when $n \ge N_0$, $G$ is at most $\epsilon n^2$ edges away from a complete $k$-partite graph. By Theorem \ref{simthm} we may assume that $\Ex(n ; H) = \{T_k(n)\}$ for all $n \ge N_0$.

Now suppose $n \ge 2N_0$ and let $L\subsetneq V(G)$ be the set of vertices with degree less than $(1-\delta)\frac{n(k-1)}{k}$. Consider an arbitrary subset $B \subset L$ with $|B| < \frac{\delta n}{2}$ and let $J = G \setminus B$. We can count the number of edges in $J$ by considering the number of edges removed from $G$ to get that
	\beqs
		e(J) &\ge& t_k(n) - f(n) - (1-\delta)n\left(\frac{k-1}{k}\right)|B|. \label{Jcount1}
	\eeqs
Now $t_k(n) \ge t_k(n-1) + \frac{k-1}{k}(n-1)$, since we can form $T_k(n)$ from $T_k(n-1)$ by adding a vertex to a smallest vertex class, and so $t_k(n) \ge t_k(n-|B|) + \frac{k-1}{k}(n-|B|)|B|$. If we apply this inequality to \eqref{Jcount1}, we see that
	\beqs
		e(J) &\ge& t_k(|J|) - f(n) + \left(\frac{k-1}{k}\right)\left(|B|(n-|B|) - (1-\delta)n|B|\right) \nonumber \\
		&=& t_k(|J|) - f(n) + \left(\frac{k-1}{k}\right)\left(\delta n|B| - |B|^2 \right) \nonumber \\
		&\ge& t_k(|J|) - f(n) + \left(\frac{k-1}{2k}\right)\delta n|B|. \label{erd1}
	\eeqs
On the other hand, $J$ does not contain a copy of $H$ and $|J| > n-\frac{\delta}{2} n > \frac{n}{2} \ge N_0$, so $e(J) \le t_k(|J|)$. Comparing this upper bound with the lower bound given by \eqref{erd1}, we see that
	\beqs
		|B| \le \frac{2k}{\delta(k-1)}f(n)n^{-1}. \label{Jcount2}
	\eeqs
Recall that $f(n) = o(n^2)$ and so $\frac{2k}{\delta(k-1)}f(n)n^{-1} < \frac{\delta n}{4}$ for large enough $n$. Since $B$ is an arbitrary subset of $L$ with $|B| < \frac{\delta n}{2}$, we can conclude that $|L| < \frac{2k}{(k-1)\delta}f(n)n^{-1}$ for sufficiently large $n$.

Now fix $J=G \setminus L$. Since $|L| <  \frac{2k}{(k-1)\delta}f(n)n^{-1}$, we have lost at most $\frac{2}{\delta}f(n) = O(f(n))$ edges, so it suffices to show that the graph $J$ must be $k$-partite. 

Let $q := |J| = (1+ o(1))n$. For sufficiently large $n$, the graph $J$ has minimum degree at least $(1-2\delta)\frac{q(k-1)}{k}$ and has $e(J) = t_k(q) - O(f(n))$. Furthermore, we already know that $J$ is at most $\epsilon n^2 = \epsilon q^2 (1+o(1))$ edges away from being $k$-partite since $J$ is a subgraph of $G$. We may then choose a partition $V_1,\ldots, V_k$ of $V(J)$  which contains at most $\epsilon n^2$ edges within the vertex classes.  For $n$ sufficiently large, since $f(n) = o(n^2)$, there are at most $\frac{3}{2}\epsilon n^2$ edges missing between the vertex classes. We now use this fact to derive information about the vertices in $J$.

Suppose some vertex $v$ has at least $\eta n$ neighbours in each vertex class. Pick $\eta n$ neighbours of $v$ in each vertex class to form $ Q \subseteq V(J)$. Now let $P$ be the subgraph of $J[Q]$ obtained by deleting all edges inside the classes $Q \cap V_i$. Note that if $P$ contains a copy of $T_k(kt)$, then $J[Q \cup \{v\}]$ contains a copy of $H$, contradicting the fact that $J$ is $H$-free. Therefore $P$ is $T_k(tk)$-free. 
An application of Proposition \ref{crux} then gives, for $n$ sufficiently large,
	\beqs
		e(P) &\le& t_k(kn) - \frac{\eta ^2}{2}n^2. \label{example2}
	\eeqs
Since $\epsilon \le \frac{\eta^2}{8}$, we get $e(P) \le t_k(k\eta n) - 4\epsilon n^2$. But then at least $4\epsilon n^2$ edges between vertex classes are not present in $J$, which gives a contradiction. We may therefore assume that every vertex in $J$ has at most $\eta n$ neighbours inside its own vertex class.

If $|V_i| - |V_j| \ge \frac{q}{50k}$ for some $j$, then $e(J) \le t_k(q) - \frac{q^2}{10000k^2} + \epsilon q^2(1+o(1)) < t_k(q) - \frac{\epsilon}{2}n^2$ for sufficiently large $n$, since $\epsilon < \frac{1}{20000k^2}$. This is impossible as $f(n) = o(n^2)$ and $e(J) = t_k(q) - O(f(n))$. So we may asume that $|V_i| \ge \frac{q}{k}(1-\frac{1}{50})$ for each $i$.

Suppose, without loss of generality, that there is an edge $uv$ inside $V_1$.  Consider the neighbourhoods of $u$ and $v$ in each of the vertex classes $V_2,\ldots V_k$. Note that $|\Gamma(u)\cap(V_2\cup \cdots \cup V_k)| \ge q(1-2\delta)(\frac{k-1}{k}) - 2\eta n$. At the same time $|V_2 \cup \cdots \cup V_k| \le q(1-\frac{1}{k}(1-\frac{1}{50}))$ and so $|(V_2 \cup \cdots \cup V_k) \setminus \Gamma(u)| \le q(\frac{1}{50k} + 2\delta + 2 \eta) \le \frac{q}{10k}$. The same argument applies for $v$ and so there are most $\frac{q}{5k}$ vertices not in $\Gamma(u)\cap \Gamma(v)$ in each vertex class $V_2,\ldots,V_k$. So $|\Gamma(u) \cap \Gamma(v) \cap V_i| \ge \frac{q}{k}(1-\frac{1}{50}) - \frac{q}{5k} \ge \frac{q}{2k} \ge \eta n$ for each $i \ge 2$.

Pick $S_1 \subset V_1$ and $S_i \subset \Gamma(u) \cap \Gamma(v) \cap V_i$ for each $i =2,\ldots,k$ with $|S_i| = \eta n$ for each $i$. Let $Q = S_1 \cup \ldots \cup S_k$ and $P$ be the subgraph of $J[Q]$ obtained by deleting all edges inside the $S_i$. Arguing as in \eqref{example2}, we get $e(P) < t_k(k\eta n) - 4\epsilon n^2$ and so arrive at the same contradiction. Therefore there is no edge $uv$ inside $V_1$ and so $J$ must be $k$-partite as required.
\end{proof}

We are now in a position to prove Theorem \ref{stable}. We have to work rather harder, and our argument is guided by the structure of the examples showing lower bounds.

\begin{proof}[\bf Proof of Theorem \ref{stable}]
Let $f(n) = o(n^2)$ be a function, and let $H$ be a graph with a critical edge and $\chi(H)$ = $k+1$. Choose $t$ such that $H \subseteq T_k(tk) + e$, where $e$ is any edge inside a vertex class of $T_k(tk)$. Let $G$ be an $H$-free graph on $n$ vertices with $e(G) \ge t_k(n) - f(n)$. Take a partition $(V_1,\ldots,V_k)$ of $V(G)$ which minimises the number of edges inside vertex classes. Then by Lemma \ref{stableb}, there are $O(f(n))$ edges within vertex classes and at most $O(f(n))$ edges between vertex classes are not present in $G$. Furthermore for each $i$, we must have $|V_i| = (1+o(1))\frac{n}{k}$, otherwise $G$ cannot contain enough edges.

Let $v$ be a vertex in $G$ with maximal number of neighbours inside its own vertex class. Without loss of generality, we may assume $v \in V_1$. Let $r(v) = |\Gamma(v) \cap V_1|$. By our choice of partition, $v$ has at least $r(v)$ neighbours in every other vertex class. Pick $r(v)$ neighbours of $v$ from each vertex class to form $Q \subseteq V(G)$ and let $J$ be the subgraph of $G[Q]$ with all the edges within vertex classes removed (so $J$ is $k$-partite). If $J$ contains a copy of $T_k(tk)$, then by adding $v$ to this copy, we must get a copy of $H$. $J$ must then be $T_k(tk)$-free and so we may apply Proposition \ref{crux} to get that if $r(v)$ is sufficiently large, then
	\beqs
		e(J) &\le& t_k(kr(v)) -  \frac{r(v)^2}{2}. \label{example1}
	\eeqs
As $G$ is missing at most $O(f(n))$ edges between vertex classes, we must have $t_k(kr(v))-e(J) = O(f(n))$ and so $r(v) = O(f(n)^{1/2})$.

Let $\delta \in (0,\frac{1}{4tk^2})$ and $S$ be the set of vertices in $V(G)$ with degree less than $n(1-\frac{1}{k})(1-\delta)$. Arguing as in Lemma \ref{stableb} (around \eqref{erd1}) we see that $|S| = O(f(n)n^{-1})$. We will show that each edge inside a vertex class is incident to a vertex in $S$.

Let $E_I$ be the set of edges inside vertex classes. Pick some edge $e=uv \in E_I$ and suppose that neither $u$ nor $v$ is an element of $S$. Without loss of generality, assume that $u,v \in V_1$. Recall that $|V_i| = (1+o(1))\frac{n}{k}$ for each $i$. Since $|\Gamma(u)| \ge n(1-\frac{1}{k})(1-\delta) \ge \frac{k-1}{k}n - \frac{n}{20k}$ and $|V_2 \cup \ldots \cup V_k| = (1+o(1))\frac{n(k-1)}{k}$, it follows that $|V_i \setminus \Gamma(u)| \le \frac{n}{20k}(1+o(1))$ for each $i \in \{2,\ldots,k\}$. The same is true for $v$. So for sufficiently large $n$, $|V_i \cap \Gamma(u) \cap \Gamma(v)| \ge \frac{n}{2k}$ for each $i \in \{2,\ldots,k\}$. Pick $B_1 \subset V_1$ and $B_i \subset \Gamma(u) \cap \Gamma(v) \cap V_i$ for each $i =2,\ldots,k$ with $|B_i| = \frac{n}{2k}$ for each $i$. If $Q = G[B_1 \cup \ldots \cup B_k]$ contains a copy of $T_k(kt)$, then $G[Q \cup \{u,v\}]$ contains a copy of $H$, contradicting the fact that $J$ is $H$-free. Therefore $Q$ is $T_k(tk)$-free. So by Proposition \ref{crux},
	\beqs
		e(Q) &\le& t_k(\frac{n}{2k}) - \frac{1}{8k^2}n^2. \nonumber
	\eeqs
This is a contradiction since $G$ is missing $O(f(n))$ edges between vertex classes. Therefore $u$ or $v$ must belong to $S$.

We have shown that each edge in $E_I$ is incident with $S$, every vertex of $S$ is incident with at most $r(v) = O(f(n)^{1/2})$ edges from $E_I$, and $|S| = O(f(n)n^{-1})$. It follows that $|E_I| = O(f(n)^{3/2}n^{-1})$.
\end{proof}

The proof of Theorem \ref{stable} came in two parts. The first part bounded the number of neighbours a vertex can have inside its respective vertex class by considering whether there is a copy of $T_k(kt)$ in its neighbourhood. When we move to the regime of graphs contained in some $M_k(a,a,1)$, we can improve the argument by considering whether there is a copy of $T_k(kt)$ present in many of the neighbourhoods of the vertex's neighbours. Again, our arguments are guided by considering the examples giving lower bounds.

\begin{proof}[\bf Proof of Theorem \ref{stable2}]
Let $f$ be a function on the natural numbers such that $f(n) = o(n^2)$, let $H$ be a graph with a critical edge and $\chi(H) = k+1$ and suppose that $h$ is such that $H \subset M_k(h,h,1)$. Let $G$ be an $H$-free graph on $n$ vertices with $e(G) \ge t_k(n) - f(n)$. Let $\delta, \eta \in (0,\frac{1}{20000hk^2})$. Take a partition $(V_1,\ldots,V_k)$ of $V(G)$ which minimises the total number of edges inside vertex classes and let $E_I$ be the set of edges inside vertex classes. Furthermore, let $S = \{u \in V(G) : d(u) \le (1- \delta)n\frac{k-1}{k}\}$.

Carrying on from the end of the proof of Theorem \ref{stable}, we know that each edge $e \in E_I$ is incident with a vertex in $S$ and that $|S| = O(f(n)n^{-1})$. It therefore suffices to show that the maximum number of neighbours a vertex can have inside its own vertex class is $O(f(n)n^{-1})$.

Suppose without loss of generality that $v \in V_1$ has the maximum number of neighbours inside its own vertex class. Then for each $i$, let $A_i = V_i \cap \Gamma(v)$ and split each $A_i$ into $B_i = S \cap A_i$ and $C_i = A_i \setminus B_i$. Let us consider the size of the $C_i$. Suppose that $|C_i| \ge h$ for each $i$ and pick $h$-subsets $D_i \subset C_i$ for each $i$. Now for each $i \in [k]$, let
	\beqs
		W_i = \left\{x \in V_i \setminus (D_i \cup \{v\}) : \bigcup_{j \neq i}D_j \subset \Gamma(x)\right\}. \label{IIhelp}
	\eeqs
Note that for large enough $n$, each $u \in D_i$ is adjacent to all but at most $2\delta \frac{n}{k}$ vertices in $V_j$. Thus for large enough $n$, $|W_i| \ge \frac{n}{2k} - 2kh\delta \frac{n}{k} \ge \eta n$. So pick $\eta n$ vertices from each $W_i$ to form a set $Q$ of vertices and let $J$ be the subgraph of $G[Q]$ with all edges inside vertex classes deleted. Note that if $J$ contains a copy, $J[F]$, of $T_k(kh)$, then $G[\{v\} \cup \bigcup_{i \in [k]}D_i \cup F]$ will contain a copy of $M_k(h,h,1)$ and so will contain a copy of $H$, a contradiction. So we may apply Proposition \ref{crux} to give that for $n$ sufficiently large,
	\beqs
		e(J) \le t_k(k\eta n) - \frac{\eta^2}{2}n^2. \nonumber
	\eeqs
On the other hand, we know that there are $O(f(n))$ edges between vertex classes not present in $G$. Therefore, $e(J) \ge t_k(k\eta n) - O(f(n))$. We then have a contradiction since $f(n) = o(n^2)$. So there is a $j \in [k]$ such that $|C_j| < h$.

Now note since $B_j \subset L$, that $|B_j| = O(f(n)n^{-1})$ and so $|A_j| =O(f(n)n^{-1})$. Note that since we have taken the partition which minimises the total number of edges inside vertex classes, $|A_1| \le |A_j|$. We conclude that the maximum number of neighbours a vertex can have inside its own vertex class is $|A_1| = O(f(n)n^{-1})$.
\end{proof}

Theorem \ref{escrit} now follows as a direct corollary of Theorem \ref{stable}.

\begin{proof}[\bf Proof of Theorem \ref{escrit}]
Let $f(n) = o(n^2)$ be a function, and let $H$ be a graph with a critical edge and $\chi(H)$ = $k+1$. Let $G$ be an $H$-free graph on $n$ vertices with $e(G) \ge t_k(n) - f(n)$. By Theorem \ref{stable}, we can delete $O(f(n)^{3/2}n^{-1})$ edges from $G$ to form a $k$-partite graph $G'$ which has $t_k(n) - O(f(n))$ edges. Suppose that $V_1,\ldots,V_k$ is a vertex colouring of $G'$. If the size of two colour classes differ by $2t$, then the maximum number of edges possible in the graph $G'$ would be $t_k(n) - \Theta(t^2)$, and so two classes can differ in size by at most $O(f(n)^{1/2})$. It follows that $||V_i| - \frac{n}{k}| = O(f(n)^{1/2})$ for each $i$ and so a new graph $G''$ with equal class sizes can be formed by deleting the edges incident to $O(f(n)^{1/2})$ vertices. $G''$ has $t_k(n) - O(f(n)^{1/2}n)$ edges and has class sizes equal to that of the Tur\'{a}n graph. We can then attain the Tur\'{a}n graph by filling in the missing edges. To summarise, we deleted $O(f(n)^{1/2}n)$ edges to form $G''$ and then added $O(f(n)^{1/2}n)$ edges to reach the Tur\'{a}n graph.

\end{proof}

To end this section, we prove Theorem \ref{stableII} using a method similar to that used by K\H{o}v\'ari, S\'{o}s and Tur\'{a}n \cite{zara}.

\begin{proof}[\bf Proof of Theorem \ref{stableII}]
Let $f(n) = o(n^2)$ be a function, and let $H$ be a graph with a critical edge and $\chi(H)$ = $k+1$. Further suppose that $h, t$ and $a$ are natural numbers such that $H$ is contained in $M_k(t,h,a)$ and that $H$ is contained in no $M_k(b,b,1)$. Suppose that $G$ is an $H$-free graph on $n$ vertices such that $e(G) \ge t_k(n) - f(n)$. Take the $k$-partition $W_1,\ldots, W_k$ of $V(G)$ which minimises the total number of edges inside vertex classes. By the proof of Theorem \ref{stable} we know that there is a set $S$ of order $|S| = O(f(n)n^{-1})$ such that $G-S$ is a $k$-partite graph, that the minimum degree of the vertices in $V(G) \setminus S$ is $(1+o(1))n\frac{k-1}{k}$ and that the maximum number of edges inside a vertex class incident to a vertex in $S$ is $O(f(n)^{1/2})$. We further know that there are $O(f(n))$ edges missing between any pair $V_i, V_j$, where $V_l = W_l \setminus S$ for each $l\in [k]$. 

Let $E_I$ be the set of edges inside the vertex classes (so that $G$ is $|E_I|$ edges away from being $k$-partite). We assume that $|E_I| = \Omega(f(n)^{1-\frac{1}{bk}}n^{\frac{1}{bk}})$, else we are done. Note that if we delete a different set of edges $F \subset E$ to obtain a $k$-partite subgraph of $G$, then since we have taken the $k$-partition which minimises the total number of edges inside vertex classes, it must be the case that $|F| \ge |E_I|$. So to get an upper bound on $|E_I|$ consider deleting all the edges between vertices in $S$, of which there are $O(f(n)^2n^{-2})$, and then deleting the edges between each vertex $s \in S$ and one of the $V_i$, where $i$ may depend upon $s$. The best we could do (in terms of minimising edges deleted) by using this method is if for each $s \in S$ we deleted the edges between $s$ and a $V_i$ such that $|\Gamma(s) \cap V_i|$ is minimised. So if we let $e_I(s) = \min\{|\Gamma(s)\cap V_i| : i \in [k]\},$ we have an upper bound for the number of irregular edges in $G$.
	\beqs
		|E_I| \le \sum_{s \in S} e_I(s) + O(f(n)^2n^{-2}). \nonumber
	\eeqs
Recall that $|E_I| = \Omega(f(n)^{1-\frac{1}{bk}}n^{\frac{1}{bk}})$ and note that $f(n)^2n^{-2} = o(f(n)^{1-\frac{1}{bk}}n^{\frac{1}{bk}})$. This means that $\sum_{s \in S} e_I(s) = \Theta(E_I)$. Let $S' = \{s\in S: e_I(s) \ge 2h\}$. Since $|S| = O(f(n)n^{-1})$ the contribution to the sum of those vertices in $S \setminus S'$ is negligible and so
	\beqs
		\sum_{s \in S'} e_I(s) = \Theta(E_I). \label{IIcount3}
	\eeqs

Following the argument of Theorem \ref{stable2} from \eqref{IIhelp}, we see that if we pick $h$-subsets $D_i$ of each $V_i$, then there exists $C_i \in (V_i \setminus D_i)^{(t)}$ for each $i$ such that $G[C_i,D_j]$ and $G[C_i,C_j]$ are homomorphic to $K_{t,h}$ and $K_{t,t}$ respectively for $i \neq j$. Therefore if there is an $a$-set $A$ in $S$ such that the vertices share $h$ common neighbours in each $V_i$, then we can find a copy of $M_k(t,h,a)$ in $G$ which contradicts our initial assumption that $G$ is $H$-free. We will therefore count how many times an element of $V_1^{(h)} \times \ldots \times V_k^{(h)}$ is contained within the neighbourhood of a vertex in $S$.

Since no element of $V_1^{(h)} \times \ldots \times V_k^{(h)}$ can we contained within the neighbourhood of $a$ distinct vertices in $S$, we have an upper bound given by $a|V_1^{(h)} \times \ldots \times V_k^{(h)}| = O(n^{hk})$. On the other hand if we count over the vertices of $S$, we see that the neighbourhood of a vertex $s \in S$, contains
	\beqs
		\prod_{i \in [k]} \binom{|\Gamma(s) \cap V_i| }{h} \ge \binom{e_I(s) }{h}^k  \label{IIcount1}
	\eeqs
elements of $V_1^{(h)} \times \ldots \times V_k^{(h)}$. Summing over the vertices in and comparing to the upper bound given earlier, we see that
	\beqs
		\sum_{s \in S}\binom{e_I(s) }{ h}^k = O(n^{hk}). \label{IIcount2}
	\eeqs

Note that we may easily bound the left hand side of \eqref{IIcount2} by summing only over $S'$ and bounding $\binom{e_I(s)}{ h}^k$ below by $(\frac{e_I(s)}{h})^{hk}$ to get that
	\beqs
		\sum_{s \in S'} e_I(s)^{hk} = O(n^{hk}). \label{IIcount4}
	\eeqs

We can then bound the left hand side by applying H\"{o}lder's inequality to get that
	\beqs
		\sum_{s \in S'} e_I(s)^{hk} &\ge& |S'|^{1-hk} (\sum_{s \in S'} e_I(s))^hk \nonumber \\
		&\ge& (f(n)n^{-1})^{1-hk}(\sum_{s \in S'} e_I(s))^hk \nonumber \\
		&=& \Theta((f(n)n^{-1})^{1-hk}E_I^{hk}). \label{IIcount5}
	\eeqs
Combining \eqref{IIcount4} and \eqref{IIcount5}, we see that
	\beqs
		(f(n)n^{-1})^{1-hk}E_I^{hk} = O(n^{hk}), \nonumber
	\eeqs
and so after rearranging, the result follows.
\end{proof}

\section{Lower Bounds}\label{countergraphs}
In this section, we prove Propositions \ref{counter1} and \ref{propcount1}.
\begin{proof}[Proof of Proposition \ref{counter1}]
Fix $k \ge 2$ and let $f(n) = o(n^2)$ be a function with $f(n) \ge 2n$. Let $r = \frac{f(n)^{1/2}}{k^2}$ and $s = \frac{f(n)}{2n}$. For $n$ a large positive integer, consider the graph
	\beqs
		G := M_k(\frac{n-s-kr}{k},r,s). \nonumber
	\eeqs
Note that the numbers given for $G$ may not be integer valued. This can easily be fixed but we have left it as it is for clarity and ease of reading (this will also be true of the remainder of this paper). Furthermore label subsets of the vertices of $G$ as in Figure \ref{graphcounter1}.

\begin{minipage}{\textwidth}
\centering
\begin{tikzpicture}[thick,scale=0.6, every node/.style={scale=0.6}]
	\node at (0,0) (z1) {} ;
	\node[vertex2,position=90:{0.7} from z1] (x1) {$U$} ;
	\node[vertex2,position=180:{3} from z1] (y3) {$W_3$} ;
	\node[vertex2,position=90:{1.5} from y3] (y2) {$W_2$} ;
	\node[vertex2,position=90:{1.5} from y2] (y1) {$W_1$} ;
	\node[vertex2,position=90:{-4} from y3] (yk) {$W_k$} ;
	\node[vertex2,position=180:{4.5} from y3] (z3) {$V_3$} ;
	\node[vertex2,position=180:{4.5} from y2] (z2) {$V_2$} ;
	\node[vertex2,position=180:{3} from y1] (z1) {$V_1$} ;
	\node[vertex2,position=180:{3} from yk] (zk) {$V_k$} ;
	\draw[dashed] (y3) -- (yk) ;
	\draw (x1) -- (y1) ;
	\draw (x1) -- (y2) ;
	\draw (x1) -- (y3) ;
	\draw (x1) -- (yk) ;
	\draw (y1) -- (z2) ;
	\draw (y1) -- (z3) ;
	\draw (y1) -- (zk) ;
	\draw (y2) -- (z1) ;
	\draw (y2) -- (z3) ;
	\draw (y2) -- (zk) ;
	\draw (y3) -- (z2) ;
	\draw (y3) -- (z1) ;
	\draw (y3) -- (zk) ;
	\draw (yk) -- (z2) ;
	\draw (yk) -- (z3) ;
	\draw (yk) -- (z1) ;
	\draw (z1) -- (z2) ;
	\draw (z3) -- (z2) ;
	\draw (z1) -- (z3) ;
	\draw (z1) -- (zk) ;
	\draw (zk) -- (z2) ;
	\draw (zk) -- (z3) ;
\end{tikzpicture}
\captionsetup{font=footnotesize}
\captionof{figure}{$G$}\label{graphcounter1}
\end{minipage}

Now suppose that $H$ is graph with a critical edge and $\chi(H) = k+1$ and $H$ is not a subgraph of $M_k(a,a,a)$ for any $a$. Thus $H$ cannot be a subgraph of $G$.

We can obtain $T_k(n)$ from $G$ by adding the edges between the $W_i$ (${k \choose 2} r^2 \le \frac{f(n)}{2}$ edges) and changing the edges incident with $U$ ($ns = \frac{f(n)}{2}$ edges). In this process we add at most $f(n)$ edges and so $e(G) \ge t_k(n) - f(n)$. It is therefore enough to show that $G$ is $\Omega(f(n)^{3/2}n^{-1})$ edges away from being $k$-partite.

So let $Q$ be a $k$-partite subgraph of $G$ formed by deleting edges from $G$. If we have deleted a fraction $\frac{1}{8k^2}$ of the edges between some pair $(U,W_i), (W_i,V_j)$ or $(V_i,V_j)$ in forming $Q$ from $G$, then in all cases we have deleted at least $\frac{f(n)^{3/2}n^{-1}}{16k^4}$ edges. Otherwise if we pick a vertex uniformly at random from each $U, W_i$ and $V_j$ to form a copy of $M_k(1,1,1)$ within $G$, then we expect to have deleted less than half an edge on average from this subgraph when forming $Q$ (note that $e(M_k(1,1,1)) \le 4k^2$). It must then be the case that we can pick a vertex from each $U, W_i$ and $V_j$ to form a copy of $M_k(1,1,1)$ in $Q$. This contradicts $Q$ being $k$-partite and so there must be some pair $(U,W_i), (W_i,V_j)$ or $(V_i,V_j)$ between which we have deleted a fraction $\frac{1}{8k^2}$ of the edges. In all cases we must have deleted $\Omega(f(n)^{3/2}n^{-1})$ edges from $G$ and so $G$ must be $\Omega(f(n)^{3/2}n^{-1})$ edges away from being $k$-partite.
\end{proof}

For critical graphs contained within some $M_k(a,a,a)$ we will consider graphs with chromatic number $k+1$ such that every small subgraph has chromatic number at most $k$. We can construct an example of such a graph by adding more levels to the Mycielskian graph of a clique and blowing it up.

\begin{defn}
Let $a,b,c,l$ and $k$ be positive integers. Let $V_1,\ldots,V_k$ be sets of size $a$, let $W_1^1,\ldots W_k^1,\ldots,W_1^l,\ldots,W_k^l$ be sets of size $b$ and let $U$ be a set of size $c$ (and let all these sets be disjoint). Then the \it $l$-layer Mycielskian graph \it $M_k^{(l)}(a,b,c)$ has vertex set $V(G) \cup \bigcup_{i=1}^k V_i \cup \bigcup_{i\in [k],m\in [l]}W_i^m \cup U$ and edge set
	\beqs
		\bigcup_{i \neq j} K[V_i,V_j] \cup \bigcup_{i\neq j} K[V_j,W_i^1] \cup \bigcup_{i\neq j,m \in [l-1]} K[W_i^m,W_j^{m+1}] \cup \bigcup_{i \in k} K[W_i^l,U]. \nonumber
	\eeqs
\end{defn}

\begin{minipage}{\textwidth}
\centering
\begin{tikzpicture}[thick,scale=0.6, every node/.style={scale=0.6}]
	\node[vertex] at (0,0) (x1) {} ;
	\node[vertex, position=90:{1.7} from x1] (x2) {} ;
	\node[vertex, position=90:{-2.2} from x1] (x3) {} ;
	\node[position=180:{3} from x1] (dl) {} ;
	\node[vertex,position=90:{0.2} from dl] (y3) {} ;
	\node[vertex,position=90:{2.2} from dl] (y2) {} ;
	\node[vertex,position=90:{3.2} from dl] (y1) {} ;
	\node[vertex,position=90:{-0.8} from dl] (y4) {} ;
	\node[vertex,position=90:{-2.8} from dl] (y5) {} ;
	\node[vertex,position=90:{-3.8} from dl] (y6) {} ;
	\node[position=180:{3} from dl] (dl2) {} ;
	\node[vertex,position=90:{0.2} from dl2] (a3) {} ;
	\node[vertex,position=90:{2.2} from dl2] (a2) {} ;
	\node[vertex,position=90:{3.2} from dl2] (a1) {} ;
	\node[vertex,position=90:{-0.8} from dl2] (a4) {} ;
	\node[vertex,position=90:{-2.8} from dl2] (a5) {} ;
	\node[vertex,position=90:{-3.8} from dl2] (a6) {} ;
	\node[position=180:{3} from dl2] (dl3) {} ;
	\node[vertex,position=90:{0.2} from dl3] (b3) {} ;
	\node[vertex,position=90:{2.2} from dl3] (b2) {} ;
	\node[vertex,position=90:{3.2} from dl3] (b1) {} ;
	\node[vertex,position=90:{-0.8} from dl3] (b4) {} ;
	\node[vertex,position=90:{-2.8} from dl3] (b5) {} ;
	\node[vertex,position=90:{-3.8} from dl3] (b6) {} ;
	\node[position=180:{5} from dl3] (dr) {} ;
	\node[vertex,position=180:{1} from dr] (z2) {} ;
	\node[vertex,position=90:{1.7} from dr] (z1) {} ;
	\node[vertex,position=90:{-2.2} from dr] (z3) {} ;
	\draw (y1) -- (a3) ;
	\draw (y1) -- (a4) ;
	\draw (y1) -- (a5) ;
	\draw (y1) -- (a6) ;
	\draw (y2) -- (a3) ;
	\draw (y2) -- (a4) ;
	\draw (y2) -- (a5) ;
	\draw (y2) -- (a6) ;
	\draw (y3) -- (a1) ;
	\draw (y3) -- (a2) ;
	\draw (y3) -- (a5) ;
	\draw (y3) -- (a6) ;
	\draw (y4) -- (a1) ;
	\draw (y4) -- (a2) ;
	\draw (y4) -- (a5) ;
	\draw (y4) -- (a6) ;
	\draw (y5) -- (a3) ;
	\draw (y5) -- (a4) ;
	\draw (y5) -- (a1) ;
	\draw (y5) -- (a2) ;
	\draw (y6) -- (a3) ;
	\draw (y6) -- (a4) ;
	\draw (y6) -- (a1) ;
	\draw (y6) -- (a2) ;
	\draw (b1) -- (a3) ;
	\draw (b1) -- (a4) ;
	\draw (b1) -- (a5) ;
	\draw (b1) -- (a6) ;
	\draw (b2) -- (a3) ;
	\draw (b2) -- (a4) ;
	\draw (b2) -- (a5) ;
	\draw (b2) -- (a6) ;
	\draw (b3) -- (a1) ;
	\draw (b3) -- (a2) ;
	\draw (b3) -- (a5) ;
	\draw (b3) -- (a6) ;
	\draw (b4) -- (a1) ;
	\draw (b4) -- (a2) ;
	\draw (b4) -- (a5) ;
	\draw (b4) -- (a6) ;
	\draw (b5) -- (a3) ;
	\draw (b5) -- (a4) ;
	\draw (b5) -- (a1) ;
	\draw (b5) -- (a2) ;
	\draw (b6) -- (a3) ;
	\draw (b6) -- (a4) ;
	\draw (b6) -- (a1) ;
	\draw (b6) -- (a2) ;
	\draw (b1) -- (z2) ;
	\draw (b1) -- (z3) ;
	\draw (b2) -- (z2) ;
	\draw (b2) -- (z3) ;
	\draw (b3) -- (z1) ;
	\draw (b3) -- (z3) ;
	\draw (b4) -- (z1) ;
	\draw (b4) -- (z3) ;
	\draw (b5) -- (z2) ;
	\draw (b5) -- (z1) ;
	\draw (b6) -- (z2) ;
	\draw (b6) -- (z1) ;
	\draw (z1) -- (z2) ;
	\draw (z2) -- (z3) ;
	\draw (z3) -- (z1) ;
	\draw (x1) -- (y1) ;
	\draw (x1) -- (y2) ;
	\draw (x1) -- (y3) ;
	\draw (x1) -- (y4) ;
	\draw (x1) -- (y5) ;
	\draw (x1) -- (y6) ;
	\draw (x2) -- (y1) ;
	\draw (x2) -- (y2) ;
	\draw (x2) -- (y3) ;
	\draw (x2) -- (y4) ;
	\draw (x2) -- (y5) ;
	\draw (x2) -- (y6) ;
	\draw (x3) -- (y1) ;
	\draw (x3) -- (y2) ;
	\draw (x3) -- (y3) ;
	\draw (x3) -- (y4) ;
	\draw (x3) -- (y5) ;
	\draw (x3) -- (y6) ;
\end{tikzpicture}
\captionsetup{font=footnotesize}

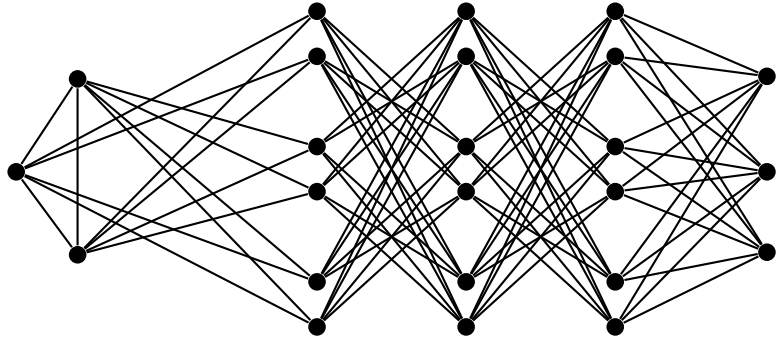
\captionof{figure}{$M_3^{(3)}(1,2,3)$}\label{neartriangle2}
\end{minipage}

\begin{proof}[Proof of Proposition \ref{propcount1}]
Fix $k \ge 2$ and let $f(n) = o(n^2)$ be a function. Let $s=\frac{f(n)}{2n}$. Suppose that $H$ is a graph with a critical edge on $N$ vertices with $\chi(H) = k+1$. For $n$ a large positive integer, consider the graph $G$,
	\beqs
		G = M_k^{(N)}(\frac{n-(Nk+1)s}{k},s,s). \nonumber
	\eeqs	
Furthermore label subsets of the vertices of $G$ as in Figure \ref{graphcounter1} (we have drawn an example with $k=4$).

\begin{minipage}{\textwidth}
\centering
\begin{tikzpicture}[thick,scale=0.6, every node/.style={scale=0.6}]
	\node at (0,0) (z1) {} ;
	\node[vertex2,position=90:{0.7} from z1] (x1) {$U$} ;
	\node[vertex2,position=180:{3} from z1] (y3) {$W^1_3$} ;
	\node[vertex2,position=90:{1.5} from y3] (y2) {$W^1_2$} ;
	\node[vertex2,position=90:{1.5} from y2] (y1) {$W^1_1$} ;
	\node[vertex2,position=90:{-4} from y3] (y4) {$W^1_4$} ;
	\node[vertex2,position=180:{3} from y3] (y23) {$W^2_3$} ;
	\node[vertex2,position=90:{1.5} from y23] (y22) {$W^2_2$} ;
	\node[vertex2,position=90:{1.5} from y22] (y21) {$W^2_1$} ;
	\node[vertex2,position=90:{-4} from y23] (y24) {$W^2_4$} ;
	\node[vertex2,position=180:{3} from y23] (y33) {$W^N_3$} ;
	\node[position=90:{0.75} from y23] (dummy1) {} ;
	\node[position=90:{0.75} from y33] (dummy2) {} ;
	\node[vertex2,position=90:{1.5} from y33] (y32) {$W^N_2$} ;
	\node[vertex2,position=90:{1.5} from y32] (y31) {$W^N_1$} ;
	\node[vertex2,position=90:{-4} from y33] (y34) {$W^N_4$} ;
	\node[vertex2,position=180:{4.5} from y33] (z3) {$V_3$} ;
	\node[vertex2,position=180:{4.5} from y32] (z2) {$V_2$} ;
	\node[vertex2,position=180:{3} from y31] (z1) {$V_1$} ;
	\node[vertex2,position=180:{3} from y34] (z4) {$V_4$} ;
	\path (dummy1) -- node[auto=false]{\Huge \ldots} (dummy2);
	\draw (x1) -- (y1) ;
	\draw (x1) -- (y2) ;
	\draw (x1) -- (y3) ;
	\draw (x1) -- (y4) ;
	\draw (y21) -- (y2) ;
	\draw (y21) -- (y3) ;
	\draw (y21) -- (y4) ;
	\draw (y22) -- (y1) ;
	\draw (y22) -- (y3) ;
	\draw (y22) -- (y4) ;
	\draw (y23) -- (y2) ;
	\draw (y23) -- (y1) ;
	\draw (y23) -- (y4) ;
	\draw (y24) -- (y2) ;
	\draw (y24) -- (y3) ;
	\draw (y24) -- (y1) ;
	\draw (y31) -- (z2) ;
	\draw (y31) -- (z3) ;
	\draw (y31) -- (z4) ;
	\draw (y32) -- (z1) ;
	\draw (y32) -- (z3) ;
	\draw (y32) -- (z4) ;
	\draw (y33) -- (z2) ;
	\draw (y33) -- (z1) ;
	\draw (y33) -- (z4) ;
	\draw (y34) -- (z2) ;
	\draw (y34) -- (z3) ;
	\draw (y34) -- (z1) ;
	\draw (z1) -- (z2) ;
	\draw (z3) -- (z2) ;
	\draw (z1) -- (z3) ;
	\draw (z1) -- (z4) ;
	\draw (z4) -- (z2) ;
	\draw (z4) -- (z3) ;
\end{tikzpicture}
\captionsetup{font=footnotesize}
\captionof{figure}{$G$}\label{graphcounter2}
\end{minipage}

Note that if we delete $U$ or $\bigcup_i V_i$, then we are left with a $k$-partite and a bipartite graph respectively. It must then be the case that any subgraph $J \subset G$ with chromatic number $k+1$ must contain vertices in both $U$ and $\bigcup_i V_i$ and so must contain at least $N + 2$ vertices. It follows that $G$ is an $H$-free graph since $|H| = N$.

We can obtain $T_k(n)$ from $G$ by adding the edges between the $W_i^j$ ($N*{k \choose 2}s^2 \le Nk^2f(n)^2n^{-2} \le \frac{f(n)}{2}$ edges) and changing the edges incident with $U$ ($ns = \frac{f(n)}{2}$ edges). In this process we add at most $f(n)$ edges and so $e(G) \ge t_k(n) - f(n)$. Therefore if we can show that $G$ is $\Omega(f(n)^2n^{-2})$ edges away from being $k$-partite, then we will be done.

So let $Q$ be a $k$-partite subgraph of $G$ formed by deleting edges from $G$. If we have deleted a fraction $\frac{1}{8N^2k^2}$ of the edges between some pair $(U,W_i^1),(W_i^l,W_j^{l+1}), (W^N_i,V_j)$ or $(V_i,V_j)$ in forming $Q$ from $G$, then in all cases we have deleted at least $\frac{f(n)^{2}n^{-2}}{16N^2k^2}$ edges. Otherwise if we pick a vertex uniformly at random from each $U, W_i^l$ and $V_j$ to form a copy of $M_k^{(N)}(1,1,1)$ within $G$, then we expect to have deleted less than half an edge on average from this subgraph when forming $Q$ (Note that $e(M_k^{(N)}(1,1,1)) \le 4N^2k^2$). It must then be the case that we can pick a vertex from each $U, W^{(l)}_i$ and $V_j$ to form a copy of $M_k^{(N)}(1,1,1)$ in $Q$. This contradicts $Q$ being $k$-partite and so there must be some pair $(U,W^l_i),(W^l_i,W_i^{l+1})$ or $(W_i,V_j)$ between which we have deleted a fraction $\frac{1}{8N^2k^2}$ of the edges. In all cases we must have deleted $\Omega(f(n)^{3/2}n^{-1})$ edges from $G$ and so $G$ is $\Omega(f(n)^2n^{-2})$ edges away from being $k$-partite.
\end{proof}

\section{Conclusion}\label{gap}
We have given bounds that are tight to within a constant factor for graphs with a critical edge that are not contained in any $M_k(a,a,a)$ and also graphs that are contained in some $M_k(a,a,1)$. It would be interesting to have even sharper bounds. For instance, is it possible to get an exact result for Theorem \ref{escrit} or Theorem \ref{stable}?

The other cases appear more difficult to handle. Theorem \ref{stableII} shows that for graphs contained in some $M_k(a,b,c)$ (but not with $c=1$), we can improve on the $O(f(n)^{3/2}n^{-1})$ upper bound of Theorem \ref{stable}. The arguments used in the proof of Theorem \ref{stableII} are rather crude, and it seems likely that stronger results should hold, at least when $f(n)$ is quite large. For instance, what can we say if $f(n) \ge n^{2- \epsilon}$ for small $\epsilon = \epsilon(H)$?

For some graphs we can improve on the $\Omega(f(n)^2n^{-2})$ lower bound of Proposition \ref{propcount1}. Let $r = f(n)^{1/2}$ and $s=f(n)n^{-1}$. Consider the graph
	\beqs
		G = M_k(\frac{n-s-kr}{k},r,s), \nonumber
	\eeqs
and label the subsets $U, W_1,\ldots,W_k,V_1,\ldots,V_k$ as in Figure \ref{graphcounter1}. Suppose we wanted to avoid a copy of $M_k(1,1,2)$. Then it would be sufficient to change the edges between $U$ and $W_1\cup \cdots \cup W_k$ so that for each pair of vertices $u_1 \neq u_2 \in U$, there is a $j \in [k]$ so that $u_1$ and $u_2$ have no common neighbour in $W_j$.

Let $q = \lceil (\frac{f(n)}{n} +1)^{\frac{1}{k}} \rceil$ and for each $i \in [k]$, let $W_i^{(0)},\ldots,W_i^{(q-1)}$ be disjoint subsets of $W_i$ all of size $\lfloor \frac{f(n)^{1/2}}{q} \rfloor$. Let the vertices of $U $ be $u_1,\ldots,u_r$, where $r = \lceil f(n)n^{-1} \rceil$. Each $j \in [r]$ can be expressed as a $q$-ary number with $k$ digits
	\beqs
		j = \sum_{i\in [k]} a_i q^{i-1}, \label{q-ary}
	\eeqs
where $a_i \in \{0,\ldots,q-1\}$ for each $i$. Now form a new graph $G'$ from $G$, where each vertex $u_j$ in $U$ has neighbourhood $\bigcup_{i \in [k]}W_i^{(a_i)}$ where the $a_i$ are as in \eqref{q-ary}. The graph $G'$ does not contain $M_k(c,b,a)$ for any $a,b$ and $c$ with $a \ge 2$, but is $\Theta(f(n)^{3/2 - \frac{1}{k}}n^{-1 + \frac{1}{k}})$ edges away from being $k$-partite. This improves a little on the bound given by Proposition \ref{propcount1}.

Proposition \ref{counter1} tells us that there are constants $C_1,C_2$ and a $K_{k+1}$-free graph $G$ with $e(G) \ge t_k(n) - C_1n$ which is at least $C_2n^{1/2}$ edges away from being $k$-partite. On the other hand, if $e(G) \ge t_k(n) -\frac{n}{k} + O(1)$ then, as noted above, a result of Simonovits \cite[p. 282]{Sim4} shows that $G$ must be $k$-partite. It would be interesting to know what happen in the range in between.

In this paper we have discussed graphs $H$ with a critical edge. It would be interesting to get sharp results for all graphs both for the Erd\H{o}s-Simonovits problem of distance from the Tur\'{a}n graph, and for the problem of the distance from being $k$-partite.

Finally, we note that the condition $f(n) = o(n^2)$ in Theorems \ref{stable}, \ref{stable2}, \ref{stableII} and Propositions \ref{counter1}, \ref{propcount1} can be replaced by the condition that $f(n) \le \epsilon n^2$ for some sufficiently small $\epsilon$ with only minor changes to the proof.

\bigskip \noindent {\bf Acknowledgements:} We would like to thank the referees for their helpful comments.

\end{document}